\documentclass[11pt]{article}

\usepackage[margin=1.15in]{geometry}
\usepackage{amsmath,amssymb,amsthm,mathtools}
\usepackage{microtype}
\usepackage{enumitem}
\usepackage{hyperref}
\usepackage{lmodern}
\usepackage[T1]{fontenc}

\hypersetup{colorlinks=true,linkcolor=blue,citecolor=blue,urlcolor=blue}

\makeatletter
\renewcommand{\author}[2][]{%
  \gdef\@shortauthor{#1}%
  \gdef\@author{#2}%
}
\newcommand{\subjclass}[2][2020]{%
  \gdef\@subjclassyear{#1}%
  \gdef\@subjclass{#2}%
}
\newcommand{\keywords}[1]{\gdef\@keywords{#1}}
\newcommand{\address}[1]{\gdef\@address{#1}}
\newcommand{\email}[1]{\gdef\@email{#1}}

\newcommand{\printmetadatafootnote}{%
  \begingroup
  \renewcommand{\thefootnote}{}%
  \footnotetext{
    \textit{\@subjclassyear\ Mathematics Subject Classification.} \@subjclass.\par
    \textit{Keywords and phrases.} \@keywords.
  }%
  \endgroup
}

\newcommand{\printaddress}{%
  \par\bigskip
  \begingroup\small
  \noindent\@address\par
  \noindent\textit{Email address:} \href{mailto:\@email}{\texttt{\@email}}
  \par\endgroup
}
\makeatother

\newtheorem{theorem}{Theorem}[section]
\newtheorem{proposition}[theorem]{Proposition}
\newtheorem{lemma}[theorem]{Lemma}
\newtheorem{corollary}[theorem]{Corollary}
\theoremstyle{remark}
\newtheorem{remark}[theorem]{Remark}

\newcommand{\PP}{\mathbb P}
\newcommand{\ZZ}{\mathbb Z}
\newcommand{\Pic}{\operatorname{Pic}}
\newcommand{\Aut}{\operatorname{Aut}}
\newcommand{\Nm}{\operatorname{Nm}}
\newcommand{\Ker}{\operatorname{Ker}}

\newcommand{\cO}{\mathcal O}
\newcommand{\RH}{\mathcal{RH}}
\newcommand{\cM}{\mathcal M}
\newcommand{\cA}{\mathcal A}
\newcommand{\cD}{\mathcal D}
\newcommand{\cP}{\mathcal P}
\newcommand{\wtC}{\widetilde C}

\title{Theta-duality and Prym-Torelli for cyclic covers of hyperelliptic curves}
\author[A. Shatsila]{Anatoli Shatsila}
\address{\textit{Anatoli Shatsila} \newline Doctoral School of Exact and Natural Sciences, Jagiellonian University in Krak\'ow, ul. prof. Stanisława Łojasiewicza 6, 
30-348 Kraków, Poland}
\email{anatoli.shatsila@doctoral.uj.edu.pl}
\subjclass[2020]{14H30, 14H40, 14H45, 14K12}
\keywords{Prym varieties, Prym maps, cyclic covers, hyperelliptic curves, theta-duality, trigonal pencils}
\date{}

\begin{document}
\maketitle
\printmetadatafootnote

\begin{abstract}
Let \(f:\wtC\to C\) be an \'etale cyclic cover of odd prime degree \(d\) of a hyperelliptic curve of genus $g\geq 2$. We revisit the theta-duality reconstruction argument for the generic injectivity of the associated Prym map by Naranjo-Ortega-Pirola-Spelta and identify an additional locus arising from the fixed divisor of the line bundles occurring in that argument. For \(d\ge5\), this locus does not affect reconstruction, giving injectivity under the numerical assumption \((d-1)(g-1)\ge7\). For \(d=3\), the geometry is governed instead by trigonal pencils on the quotient of $\wtC$ by a lift of the hyperelliptic involution; this yields generic degree \(2\) in genus \(5\) and injectivity for \(g\ge6\).

\end{abstract}

\section{Introduction}

The Prym-Torelli problem asks whether a covering of curves is determined by its Prym variety together with its natural polarization. We note an important difference between the generic and global versions of this problem. For the classical Prym map of \'etale double covers, generic injectivity in genus at least seven coexists with distinct coverings having the same Prym variety, as shown by the tetragonal construction (see \cite{Donagi}). Therefore it is natural to ask for Prym maps where global Prym-Torelli theorem holds.

\'Etale cyclic covers of hyperelliptic curves provide a natural setting for these questions. In odd degree, the hyperelliptic involution lifts to the covering curve and extends the cyclic action to a dihedral action. The Jacobians of the quotient curves enter the description of the Prym variety, as in the work of Ries and Ortega \cite{Ries,Ort}. This separates reconstruction into two problems: recovering a quotient curve from the polarized Prym, and recovering the distinguished map from that curve to $\PP^1$ whose Galois closure determines the original covering. We establish global reconstruction in a range of odd prime degrees and show that, in degree three, the ambiguity is precisely the choice of a simply branched trigonal map on the quotient curve.

Let $C$ be a smooth hyperelliptic curve of genus $g\ge2$, let $d=2k+1$ be an odd prime, and let $f:\wtC\to C$ be an \'etale cyclic cover of degree $d$. Its Prym variety
\[
P=P(f):=\bigl[\Ker(\Nm_f:J\wtC\to JC)\bigr]^0
\]
has dimension $(d-1)(g-1)$ and carries the polarization $\Xi_f$ obtained by restricting the canonical principal polarization of $J\wtC$. Its polarization type is
\[
\delta=(\underbrace{1,\ldots,1}_{(d-2)(g-1)},
\underbrace{d,\ldots,d}_{g-1}).
\]
We denote by $\RH_g[d]$ the moduli space of these cyclic covers and by $\cA_m^\delta$ the moduli space of polarized abelian varieties of dimension $m$ and type $\delta$. The associated Prym map is
\[
\mathcal P_g[d]:\RH_g[d]\longrightarrow\cA_{(d-1)(g-1)}^\delta.
\]

Our starting point is the theta-duality reconstruction of Naranjo--Ortega--Pirola--Spelta \cite{NOPS}. Their Theorem~1.2 states generic injectivity under the hypotheses
\[
(d-1)(g-1)\ge7,\qquad g\not\equiv3\pmod d.
\]
We revisit the theta-dual computation underlying that statement and combine it with an intrinsic recovery of the quotient Jacobian. For $d\ge5$, this gives the following improvement.

\begin{theorem}\label{thm:mainprime}
Let $d\ge5$ be an odd prime and suppose that $(d-1)(g-1)\ge7.$ Then the hyperelliptic Prym map $\mathcal P_{g}[d]$ is injective. 
\end{theorem}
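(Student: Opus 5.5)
The plan is to reconstruct the cover $f:\wtC\to C$ from $(P,\Xi_f)$ in two stages. The first stage recovers a quotient curve of the dihedral action together with its Jacobian. The second stage recovers the distinguished map of that curve to $\PP^1$, whose Galois closure gives back $\wtC\to C$.

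\textbf{Setup and isogeny.} Since $d=2k+1$ is odd, the hyperelliptic involution lifts to an involution $\sigma$ of $\wtC$. Together with the deck group this generates a dihedral group $D_d$. Let $Y=\wtC/\langle\sigma\rangle$; then $Y\to\PP^1$ is a degree $d$ map whose Galois closure is $\wtC\to\PP^1$. Following Ries and Ortega \cite{Ries,Ort}, $P$ is isogenous to $JY\times JY$, and $JY$ has dimension $k(g-1)$. More precisely, the image of $JY$ in $J\wtC$ lies in $P$, and the restriction of $\Xi_f$ to it is a multiple of the principal polarization of $JY$.

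\textbf{Recovering $JY$ and $Y$ via theta-duality.} I would follow \cite{NOPS}. Consider the theta-dual $T(V)=\{\alpha\in P:\ V\subset \Xi_\alpha\}$ of the image $V$ of a suitable Abel--Prym-type subvariety, built from symmetric products of $\wtC$ or of $Y$. One shows that $T(V)$ contains a component equal to a translate of an Abel-type curve or Brill--Noether locus attached to $Y$, from which $Y$ can be read off via Torelli for $JY$.

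The main obstacle is identifying every component of $T(V)$. The line bundles in the \cite{NOPS} argument can acquire a fixed divisor, which produces an extra locus; this is presumably the source of the hypothesis $g\not\equiv3 \pmod d$ there. The plan is to compute the fixed part explicitly. It comes from divisors pulled back from $g^1_2$-fibres or from $\sigma$-invariant configurations. I would then show that for $d\ge5$ the extra locus has dimension strictly smaller than the reconstructing component, or else is not of the shape of the curve component, so it can be discarded intrinsically.

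In parallel I would recover $JY$ intrinsically, not merely up to isogeny. One route is as the connected component of the subgroup of $P$ fixed by an automorphism determined by the polarization. Another is as the abelian subvariety on which $\Xi_f$ restricts to a multiple of a principal polarization of the right dimension. The numerical assumption $(d-1)(g-1)\ge7$ enters through the dimension counts needed for $T(V)$ to have the expected structure, as in \cite{NOPS}.

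\textbf{Recovering the map $Y\to\PP^1$.} Next I would show that $Y$ carries a unique $g^1_d$ whose Galois closure has dihedral monodromy of the required type, i.e.\ branched over $2g+2$ points with simple-involution local monodromy. For $d\ge5$ I expect uniqueness from Castelnuovo--Severi type arguments: a second such pencil would give a map $Y\to\PP^1\times\PP^1$ of bidegree $(d,d)$. For $k\ge2$ this forces a genus inequality that contradicts $g(Y)=k(g-1)$ in the allowed range. For $d=3$ this step genuinely fails, because trigonal pencils are not unique, which explains the separate treatment of that case. Finally, the Galois closure of $Y\to\PP^1$ recovers $\wtC$, the $D_d$-action, and hence $C$ and $f$, which proves injectivity.
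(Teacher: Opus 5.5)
\textbf{The gap is in your last step.} You try to recover $h\colon Y\to\PP^1$ by showing that $Y$ carries a unique $g^1_d$ of dihedral type, via Castelnuovo--Severi. Since $d$ is prime, two distinct degree-$d$ maps $Y\to\PP^1$ never factor through a common map, but Castelnuovo--Severi then only gives $g(Y)\le (d-1)^2=4k^2$. The value $g(Y)=k(g-1)$ violates this only when $g>4k+1=2d-1$, and the theorem covers far more than that:
\begin{itemize}
\item for $d=5$, $g=3$ one has $g(Y)=4\le 16$, and every degree-five line bundle on $Y$ has $h^0\ge2$, so $Y$ carries a four-dimensional family of $g^1_5$'s;
\item for $d=11$, $g=2$ one has $g(Y)=5\le 100$.
\end{itemize}
For fixed $g$ the bound gets weaker as $d$ grows, so the argument fails exactly in the range you need. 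The dihedral monodromy condition is never actually used. It is not even clear that $Y$ alone determines $h$ there, and the paper's proof does not need it.

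\textbf{The missing idea} is to extract more than $Y$ from $(P,\Xi)$, and to use theta-duality to find $h$ rather than $Y$. By \cite[Proposition 2.1]{BNOS}, the polarized automorphisms acting trivially on $K(\Xi)$ form a group $\Gamma(P,\Xi)\cong D_d$. This is what makes an involution $r$ and the cyclic subgroup intrinsic; your ``automorphism determined by the polarization'' needs exactly this input. Write $j$ for your lifted involution and $\sigma$ for a deck generator, and take $r=-j|_P$.
\begin{itemize}
\item $\operatorname{Im}(1-r)$ is $q^*JY$, and $\Xi$ restricts to $2\Theta_Y$ on it, so Torelli gives $Y$ directly. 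Your theta-duality step aimed at $Y$ is therefore redundant.
\item $\sigma^i+\sigma^{-i}$ restrict to endomorphisms $\beta_i$ of $JY$, which your proposal never uses. They encode $h$: $\beta_i([p-x])=[p_i+p_{d-i}-D_i]$ with $D_i=x_i+x_{d-i}$.
\end{itemize}
Now set $A=h^*\cO_{\PP^1}(1)$. The theta-dual $T_i'=\{\xi\in\Pic^{n-1}(Y):\beta_i(Y)+\xi\subset W_{n-1}(Y)\}$ equals
\[
\bigl(D_i+W_{n-3}(Y)\bigr)\cup\bigl(D_i+K_Y-A-W_{n+1-d}(Y)\bigr).
\]
The second piece is the fixed-divisor locus. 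It comes from pencils $|L|=B+|A|$ with $B$ an \emph{arbitrary} effective divisor of degree $n+1-d$, not a special configuration from $g^1_2$-fibres. For $d\ge5$ this piece has dimension at most $n+1-d<n-3$. The rest of the reconstruction then goes as follows:
\begin{itemize}
\item the class $D_i$ is read off from the unique top-dimensional component, since $W_{n-3}(Y)$ has trivial translation stabilizer;
\item for general $x$, $D_i$ has a unique effective representative;
\item hence $h^{-1}(h(x))=x+D_1+\cdots+D_k$, which determines $h$ and, by Galois closure, the cover.
\end{itemize}
The hypothesis $(d-1)(g-1)\ge7$ is used only to guarantee $n=g(Y)\ge4$.
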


The conclusion is global: no generality assumption is imposed on the covering. Thus the theorem both removes the congruence restriction and establishes global injectivity. We note that in the paper injectivity concerns isomorphism classes of smooth covers, rather than an assertion that the moduli map is an embedding.

The two improvements arise from separate steps. First, we use the description in \cite[Proposition 2.1]{BNOS} of the polarized automorphisms acting trivially on the kernel of the Prym polarization. This recovers the dihedral group acting on the Prym intrinsically. An involution then determines the principally polarized Jacobian of the quotient curve $C_0=\wtC/\langle j\rangle$, where $j$ is a lift of the hyperelliptic involution. The cyclic subgroup also recovers the endomorphisms $\beta_i=\sigma^i+\sigma^{-i}$ of $JC_0$, where $\sigma$ generates the deck group. This step does not require the covering to be general.

Second, we recover the degree-$d$ map $h:C_0\to\PP^1$ from theta-duals of the curves $\beta_i(C_0)\subset JC_0$. The argument here follows \cite{NOPS}, however, we note that the computation in \cite[Proposition 4.3]{NOPS} requires correction. In the proof of \cite[Proposition 4.3]{NOPS}, once the morphism associated with a pencil \(|L|\) is shown to factor through the degree-\(d\) map \(h:C_0\to\mathbb P^1\), it is concluded that \(d\) divides \(\deg L\). This conclusion is valid for the degree of the moving part of \(|L|\), but \(L\) may have a nontrivial fixed divisor. Keeping track of this fixed part produces an additional residual locus in the theta-dual (see Proposition \ref{prop:thetaexact} and Remark \ref{rem:fixedpart}). For \(d\geq 5\) this locus has strictly smaller dimension than the translated Brill-Noether component used in the reconstruction, so the argument can be modified to recover the cover without the congruence assumption imposed in \cite{NOPS}.

In degree \(d=3\), the situation is very different: the residual locus contains the translated Brill-Noether component. Consequently, the support of the theta-dual no longer determines the degree-three map \(C_0\to\mathbb P^1\), however, there is another geometric phenomenon which makes the reconstruction possible. The endomorphism $\beta_1=\sigma+\sigma^{-1}$ equals $-1$, and the polarized product description of the Prym depends only on the quotient curve $C_0$, which now has genus $g-1$. Conversely, the polarized Prym determines $C_0$. The trigonal map on $C_0$ is the additional datum needed to recover the cover: its $S_3$-Galois closure produces an \'etale cyclic triple cover of a hyperelliptic curve. We make this correspondence explicit and identify each Prym fiber with the set of simply branched trigonal maps on $C_0$ (see Theorem~\ref{thm:cubicfactorization}). The cubic Prym map thus has the same fibers as the forgetful map from the corresponding trigonal Hurwitz space to the moduli space of curves.

\begin{theorem}\label{thm:cubicmain}
The Prym map $\mathcal P_{g}[3]$ is injective for $g \geq 6$. For $2 \leq g \leq 5$, the general fiber of $\mathcal P_{g}[3]$ is as follows:
\begin{enumerate}[label=\textup{(\roman*)}]
\item for $g=2$ it has dimension $2$;
\item for $g=3$ it has dimension $2$;
\item for $g=4$ it has dimension $1$;
\item for $g=5$ it consists of exactly two points.
\end{enumerate}
\end{theorem}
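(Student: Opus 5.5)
The proof rests on the factorization announced in the introduction (Theorem~\ref{thm:cubicfactorization}). For $d=3$, the fiber of $\mathcal P_g[3]$ over a point is in bijection with the set of simply branched trigonal maps $h:C_0\to\PP^1$ on the quotient curve $C_0=\wtC/\langle j\rangle$, which has genus $g-1$, taken up to automorphisms of $\PP^1$. So the plan is to first establish this factorization and then count trigonal structures.

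For the factorization I would argue in two directions.

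\emph{The Prym determines $C_0$.} Using \cite[Proposition 2.1]{BNOS}, the dihedral group $D_3$ acting on $(P,\Xi_f)$ is recovered intrinsically. Since $\beta_1=\sigma+\sigma^{-1}=-1$ on $JC_0$, the polarized Prym is isogenous, compatibly with polarizations, to $JC_0\times JC_0$, with the polarization of type $\delta$ determined by $JC_0$ alone. An involution in the recovered $D_3$ then singles out the principally polarized $JC_0$. Classical Torelli gives back $C_0$, and the choice of involution does not matter up to isomorphism, since all involutions are conjugate.

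\emph{A trigonal map recovers the cover.} Given a simply branched $h:C_0\to\PP^1$, take its $S_3$-Galois closure $\wtC\to\PP^1$. The quotient of $\wtC$ by $A_3$ is a double cover $C\to\PP^1$, branched exactly at the branch points of $h$. Simple branching is what makes $\wtC\to C$ \'etale cyclic of degree $3$. Riemann--Hurwitz gives $2(g-1)+4=2g+2$ branch points, so $C$ has genus $g$. The two constructions are inverse, since $\wtC\to C_0\to\PP^1$ is precisely the Galois closure of the original data.

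It remains to count, for general $C_0$ in the image, the simply branched trigonal maps, where $C_0$ ranges over trigonal curves of genus $\gamma=g-1$:
\begin{itemize}
\item $\gamma=1$: $g^1_3$'s on an elliptic curve form a $2$-dimensional family, namely $\Pic^3$ together with the choice of a pencil in the $3$-dimensional space of sections, so $1+1=2$ after quotienting.
\item $\gamma=2$: the $g^1_3$'s have the form $K+p$ (which has a base point, so is excluded) or general degree-$3$ bundles with $h^0=2$, a $2$-dimensional family.
\item $\gamma=3$: a nonhyperelliptic plane quartic has the $1$-dimensional family of projections from points of the curve.
\item $\gamma=4$: the curve lies on a quadric cone or smooth quadric, and a general such curve has exactly two $g^1_3$'s.
\item $\gamma\ge5$: the $g^1_3$ is unique by Maroni/Castelnuovo theory.
\end{itemize}
In every case the general map is simply branched, so the general fiber has dimension $2,2,1$, exactly two points, and one point, respectively. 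For $g\ge6$, injectivity must hold globally, not just generically, so every trigonal $C_0$ of genus $\ge5$ must carry a unique $g^1_3$. This is true: two distinct $g^1_3$'s would give a map to $\PP^1\times\PP^1$ of bidegree $(3,3)$, forcing genus $\le4$ by adjunction.

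The main obstacle I expect is making the first direction global: showing the polarized Prym determines $JC_0$ canonically even for special covers, in particular when $P$ has extra automorphisms. I would handle this by extracting $JC_0$ as the image, under the natural isogeny, of $(1+\tau)$ for an involution $\tau$ recovered from \cite{BNOS}.
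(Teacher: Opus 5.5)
Your overall route matches the paper's. You factor $\mathcal P_g[3]$ through $C_0$ (intrinsic $D_3$ from \cite{BNOS}, an involution, Torelli), identify cyclic triple covers with simply branched trigonal maps via the $S_3$-Galois closure, and count $g^1_3$'s genus by genus. Your bidegree-$(3,3)$ argument for $\gamma\ge5$ is the standard proof of the Castelnuovo--Severi uniqueness that the paper invokes. Your resolution of the ``main obstacle'' is exactly Proposition~\ref{prop:reflectionJ}; the paper uses $\operatorname{Im}(1-r)$ for $r=-j|_P\in\Gamma(P,\Xi)$, which is $q^*JC_0$, with $\Xi$ restricting to $2\Theta_{C_0}$.

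There is, however, a genuine omission in (iv). The fiber of $\mathcal P_g[3]$ is the set of simply branched trigonal maps on $C_0$ modulo $\Aut(C_0)\times\Aut(\PP^1)$, not modulo $\Aut(\PP^1)$ alone. If $h'=h\circ\gamma$ with $\gamma\in\Aut(C_0)$, the Galois closures, and hence the cyclic covers, are isomorphic. Your elliptic count already quotients by translations, so you use this implicitly there. Consequently the two $g^1_3$'s on a genus-four $C_0$ only show that the fiber has at most two points. To get exactly two, you must exclude an automorphism of $C_0$ carrying one pencil to the other. The paper does this by noting that a general genus-four curve has trivial automorphism group, so the two covers are distinct in $\RH_5[3]$.

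There are two smaller points.

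First, the factorization needs an \emph{isomorphism} of polarized abelian varieties as in Proposition~\ref{prop:cubicfactor}, not merely a polarized isogeny. With only an isogeny, covers with the same $C_0$ need not have isomorphic Pryms, and the lower bounds on the fibers in (i)--(iv) are lost. The fix is quick: $(a,b)\mapsto q^*a+\sigma q^*b$ pulls $\Xi$ back to the matrix polarization. That polarization has degree $3^{2n}$, which equals the degree of $\Xi$, so the map has degree one.

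Second, in the elliptic case the bookkeeping should be $\dim\Pic^3(C_0)+\dim\operatorname{Gr}(2,3)-1=1+2-1=2$, where the last term accounts for translations. Pencils in a three-dimensional space of sections form a $\PP^2$. Your ``$1+1$'' lands on the right number, but not by a correct count.
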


The low-genus behavior in \textup{(i)-(iv)} is essentially due to Albano-Pirola \cite{AP}; we include the arguments to give a uniform description of the cubic fibers. Generic injectivity for $g\ge6$ also follows from \cite[Remark 4.5]{NOPS}, independently of the corrected theta-dual computation. Our contribution is the passage to global injectivity. In genus $g=5$, however, the generic degree is two, so the generic injectivity conclusion of \cite[Theorem 1.2]{NOPS} does not hold.

The exceptional degree two has a geometric interpretation. The quotient $C_0$ is a general genus-four curve, and its two trigonal pencils are exchanged by $L\mapsto K_{C_0}\otimes L^{-1}$. They arise from the two rulings of the smooth quadric containing the canonical curve and yield the two covers in a general Prym fiber. For $g\ge6$, the quotient has genus at least five and its trigonal pencil is unique, which gives generic injectivity. 

The paper is organized as follows. Section \ref{sec:intrinsic} recalls the dihedral construction and proves the intrinsic recovery of the quotient Jacobian and its endomorphisms. Section \ref{sec:theta} computes the corrected theta-dual, proves Theorem \ref{thm:mainprime}, and explains the different behavior in degree three. Section \ref{sec:d3corr} identifies the cubic Prym fibers with fibers of the trigonal Hurwitz map and proves Theorem \ref{thm:cubicmain}.

\subsection*{Acknowledgements.} 
The author was supported by the Polish National Science Centre project number 2024/53/N/ST1/01634.

\section{Preliminaries and the dihedral reconstruction}\label{sec:intrinsic}

We start by collecting some facts from \cite{Ries,Ort,NOPS}. 

Let $\iota$ be the hyperelliptic involution of $C$. It lifts to an involution $j\in\Aut(\wtC)$ satisfying
\[
j\sigma j=\sigma^{-1},
\]
where $\sigma$ is a generator of the cyclic deck group. Thus $\langle\sigma,j\rangle\simeq D_d$. Recall that all involutions in $D_d$ are pairwise conjugate. Set $C_0 := \wtC/\langle j \rangle$. For $i \in \{1,\ldots, k\}$ the map $\beta_i = \sigma^i+\sigma^{-i}$ restricts to an (unpolarized) automorphism of $JC_0$; see \cite[Proposition 4.1]{NOPS}. 

As in \cite[Section 4]{NOPS}, the Prym map $\mathcal{P}_g[d]$ factors as 
\[
\mathcal{P}_g[d]:
\RH_g[d]
\xrightarrow{\,\cP_g^1[d]\,}
\cD_k
\xrightarrow{\,\cP_g^2[d]\,}
\cA^{\delta}_{(d-1)(g-1)},
\]
where $\mathcal{D}_k$ parametrizes classes of objects $(C_0,\{\beta_1,\ldots,\beta_k\})$,  $\cP_g^1[d]$ sends $(C,\langle\eta\rangle)$ to $(C_0,\{\beta_1,\ldots,\beta_k\})$, and $\cP_g^2[d]$ sends $(C_0,\{\beta_1,\ldots,\beta_k\})$ to
$
\left(
JC_0 \times JC_0,\,
\begin{pmatrix}
2\lambda_{C_0} & \lambda_{C_0}\beta_i\\
\lambda_{C_0}\beta_i & 2\lambda_{C_0}
\end{pmatrix}
\right)
$ for any $i$,
which is isomorphic to the Prym variety $(P, \Xi)$. 

It was shown in \cite[Proposition 4.2]{NOPS} that $\cP_g^2[d]$ is generically injective. Below we offer an alternative proof of injectivity. Define
\[
\Gamma(P,\Xi):=
\{\varphi\in\Aut(P,\Xi):\varphi|_{K(\Xi)}=\operatorname{Id}\}.
\]

We shall use the following result, proved in \cite[Proposition 2.1]{BNOS}.

\begin{proposition}\label{prop:gamma}
One has
\[
\Gamma(P,\Xi)=
\{\sigma^a|_P,\,- j\sigma^a|_P: a\in\ZZ/d\ZZ\},\footnote{The group is stated as $\ZZ/d\ZZ$ in \cite[Equation (4.2)]{NOPS}; see
\cite[Remark 2.2]{BNOS}. This does not make a difference for our argument, since
$\langle\sigma\rangle$ is intrinsic in $D_d$.}
\]
and the natural map $D_d\to\Gamma(P,\Xi)$ is an isomorphism.
\end{proposition}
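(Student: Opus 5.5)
Although the paper cites this result from \cite[Proposition 2.1]{BNOS}, here is how I would prove it directly. There are two inclusions, followed by injectivity of $D_d\to\Gamma(P,\Xi)$. Throughout I use the isogeny $JC_0\times JC_0\to P$ underlying the polarized product description. It is convenient to work with the equivalent lattice description: $P$ is the complementary abelian subvariety of $f^*JC$ in $J\wtC$. $D_d$ acts on $J\wtC$ preserving the principal polarization and preserving $f^*JC$, hence preserving $P$ and $\Xi=\Theta|_P$. Since $-1$ is also a polarized automorphism, the elements $\sigma^a|_P$ and $-j\sigma^a|_P$ lie in $\Aut(P,\Xi)$.

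\textbf{Step 1: these elements act trivially on $K(\Xi)$.} Let $N=1+\sigma+\dots+\sigma^{d-1}$. By the standard complementary-subvariety formalism, $K(\Xi)=P\cap f^*JC$, which has order $d^{2(g-1)}$, consistent with type $\delta$. On $f^*JC$ the element $\sigma$ acts as the identity, so $\sigma^a$ is trivial on $K(\Xi)$. For $j$: it acts on $f^*JC$ as $\iota^*$, and $\iota^*=-1$ on $JC$ because $C$ is hyperelliptic. Hence $-j\sigma^a$ also acts as $+1$ on $K(\Xi)$. This gives the inclusion of the listed set in $\Gamma(P,\Xi)$, and shows that the map $D_d\to\Gamma$, sending $\sigma\mapsto\sigma|_P$ and $j\mapsto -j|_P$, is a homomorphism (since $-1$ is central).

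\textbf{Step 2: injectivity.} If $\sigma^a|_P=\mathrm{Id}$ with $a\neq0$, then $\sigma$ acts trivially on $P$. But on $H^0(K_{\wtC})$ the nontrivial characters of $\langle\sigma\rangle$ occur, precisely in the cotangent space of $P$, which has positive dimension $(d-1)(g-1)$. This is a contradiction. If $-j\sigma^a|_P=\mathrm{Id}$, then $j\sigma^a$ is an involution (it is conjugate to $j$) acting as $-1$ on $P$. Then $JC_0$, the image of $1+j$, would have trivial contribution to $P$. This contradicts the isogeny $JC_0\times JC_0\to P$ with $g(C_0)>0$.

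\textbf{Step 3 (main obstacle): the reverse inclusion.} Let $\varphi\in\Gamma(P,\Xi)$. Because $\varphi$ is trivial on $K(\Xi)$, it extends via the gluing $J\wtC=(f^*JC\times P)/K$ to a polarized automorphism $\tilde\varphi=(\mathrm{Id},\varphi)$ of $(J\wtC,\Theta)$. By Torelli, $\tilde\varphi=\pm\tau_*$ for some $\tau\in\Aut(\wtC)$. Since $\tilde\varphi$ is the identity on $f^*JC$, one gets $\tau_*|_{f^*JC}=\pm1$. In the $+$ case, $\tau$ commutes with the action on $JC$ and must descend to $C$ acting trivially on $JC$; then $\tau$ lies in the deck group $\langle\sigma\rangle$. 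The hard point is exactly this descent: I would argue that $\tau$ normalizes $\langle\sigma\rangle$, because $\langle\sigma\rangle$ is characterized as the group acting trivially on $f^*JC$ modulo $P$-considerations. The induced automorphism of $C$ acts trivially on $JC$, so it is the identity since $g\ge2$. Hence $\tau\in\langle\sigma\rangle$. In the $-$ case, $\tau$ induces an automorphism of $C$ acting as $-1$ on $JC$, which must be $\iota$. So $\tau$ is a lift of $\iota$, i.e. $\tau=j\sigma^a$, and $\varphi=-j\sigma^a|_P$. I expect the careful justification that $\tau$ normalizes the deck group, for instance via the isotypical decomposition of $H^0(K_{\wtC})$, to be the delicate step.
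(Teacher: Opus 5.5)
\paragraph{Overall assessment.} The paper gives no proof of this proposition; it imports it from \cite[Proposition 2.1]{BNOS}, so your argument has to be judged on its own. The architecture is the natural one, and most of it is sound. Steps 1 and 2 are correct: $K(\Xi)=P\cap f^*JC$, $\sigma$ acts trivially on $f^*JC$, and $j$ acts there as $\iota^*=-1$. Injectivity also holds; more quickly, the kernel is a normal subgroup of $D_d$, so it suffices that $\sigma|_P\neq\mathrm{Id}$. The extension $\tilde\varphi=(\mathrm{Id},\varphi)$ and the appeal to Torelli are fine too.

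\paragraph{The gap.} The gap is exactly where you flag it in Step 3. You assert, but do not prove, that $\tau$ normalizes $\langle\sigma\rangle$. The reason you offer (``characterized as the group acting trivially on $f^*JC$ modulo $P$-considerations'') is not an argument. Everything afterwards depends on this claim. Before descent is known, $\tau$ induces nothing on $C$, so ``$\tau$ commutes with the action on $JC$'' has no content. Both the $+$ case and the $-$ case need the descent.

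\paragraph{How to close it.} The missing idea is a genus comparison, and $P$ plays no role in it.
\begin{enumerate}[label=\textup{(\arabic*)}]
\item Since $\tau_*$ preserves $f^*JC$ and acts there as $\pm1$, the restriction of $(\tau\sigma\tau^{-1})_*=\tau_*\sigma_*\tau_*^{-1}$ to $f^*JC$ is $(\pm1)\circ\mathrm{Id}\circ(\pm1)=\mathrm{Id}$. So it suffices to show that $H:=\{\alpha\in\Aut(\wtC):\alpha_*|_{f^*JC}=\mathrm{Id}\}$ equals $\langle\sigma\rangle$.
\item Every $\alpha\in H$ fixes $T_0(f^*JC)=H^1(\wtC,\cO_{\wtC})^{\langle\sigma\rangle}$ pointwise. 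Hence $g(\wtC/H)=\dim H^1(\wtC,\cO_{\wtC})^{H}\ge g$.
\item Since $\langle\sigma\rangle\subseteq H$, the quotient map factors through a morphism $C\to\wtC/H$ of degree $|H|/d$. Riemann--Hurwitz gives $2g-2\ge(|H|/d)(2g-2)$, and as $g\ge2$ this forces $H=\langle\sigma\rangle$.
\item Therefore $\tau$ descends to some $\bar\tau\in\Aut(C)$ with $\tau_*f^*=f^*\bar\tau_*$. Because $\Ker f^*$ is finite and $JC$ is connected, $\tau_*|_{f^*JC}=\pm1$ forces $\bar\tau_*=\pm1$ on $JC$.
\item Since $\Aut(C)$ acts faithfully on $JC$ for $g\ge2$, we get $\bar\tau\in\{\mathrm{id},\iota\}$. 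Thus $\tau\in\langle\sigma\rangle$ or $\tau\in j\langle\sigma\rangle$.
\end{enumerate}
This yields $\varphi=\sigma^a|_P$ or $\varphi=-j\sigma^a|_P$, as you intended, and completes the reverse inclusion.
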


Since $d$ is odd, the subgroup $\langle\sigma\rangle$ is the unique subgroup of order $d$ in $D_d$, so it is intrinsic. Choose an involution $r\in\Gamma(P,\Xi)\setminus\langle\sigma\rangle$. In a geometric realization we may write $r=-j|_P$. Let
\[
q:\wtC\longrightarrow C_0:=\wtC/\langle j\rangle.
\]

The following proposition is the consequence of $q$ being a ramified double cover.

\begin{proposition}\label{prop:reflectionJ}
The abelian subvariety
\[
A_r:=\operatorname{Im}(1-r)\subset P
\]
is equal to $q^*JC_0$. The pullback $q^*:JC_0\to J\wtC$ is injective and
\[
\Xi|_{A_r}=2\Theta_{C_0}.
\]
Consequently $(JC_0,\Theta_{C_0})$, and hence $C_0$, is intrinsically recovered from $(P,\Xi)$ together with an involution.
\end{proposition}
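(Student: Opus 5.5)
Since $r=-j|_P$, we have $1-r=1+j$ on $P$, so the plan is to identify $\operatorname{Im}(1+j)$ on $J\wtC$ with $q^*JC_0$ and then check that it lies in $P$. On $J\wtC$ the map $1+j$ factors as $q^*\circ\Nm_q$. Because $q$ is ramified, $\Nm_q:J\wtC\to JC_0$ is surjective, so $(1+j)(J\wtC)=q^*JC_0$. Restricting $1+j$ to $P$ gives a potentially smaller image, so I need a separate argument. I would use the isogeny decomposition of $J\wtC$ under $D_d$: $J\wtC\sim JC\times P$ with $JC=f^*JC$ up to isogeny. The involution $j$ acts on $f^*JC$ as $\iota^*=-1$, so $1+j$ kills $f^*JC$. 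Hence $\operatorname{Im}(1+j|_P)=\operatorname{Im}(1+j)=q^*JC_0$ by connectedness and dimension count, giving $\dim A_r=g_{C_0}$.

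Next I would check that $q^*JC_0\subset P$. For $x\in JC_0$, $\Nm_f(q^*x)$ is $\iota$-invariant in $JC$, since $j$ covers $\iota$. The $\iota$-invariant part of $JC$ is finite, as $C/\iota=\PP^1$. So $\Nm_f\circ q^*$ maps the connected variety $JC_0$ into a finite group and is therefore $0$. This puts $q^*JC_0$ in $\Ker\Nm_f$, and by connectedness in $P$. This is consistent with the dimension counts: for $d\ge5$, $g(C_0)=k(g-1)$ by Riemann–Hurwitz, and $\dim P=(d-1)(g-1)=2k(g-1)$.

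For injectivity of $q^*$ I would invoke the standard fact that for a double cover $q$, $\Ker q^*$ is generated by the line bundle defining the cover. That bundle is trivial when $q$ is ramified, so $q^*$ is injective. Here $q$ is ramified because the hyperelliptic involution has $2g+2$ fixed points on $C$ and $d$ is odd: over each fixed point, $j$ permutes the $d$ preimages as an involution, so it fixes at least one of them.

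For the polarization, the canonical polarization $\Theta_{\wtC}$ pulls back under $q^*$ to $\Nm_q\circ q^*=2$ times $\Theta_{C_0}$. In terms of homomorphisms, $(q^*)^\vee\lambda_{\wtC}q^*=\lambda_{C_0}\Nm_q q^*=2\lambda_{C_0}$. Since $\Xi$ is the restriction of $\Theta_{\wtC}$ to $P$, and $q^*$ is an isomorphism onto $A_r$, we get $\Xi|_{A_r}=2\Theta_{C_0}$.

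Finally, $\lambda_{\Xi|_{A_r}}=2\lambda_{C_0}$ with $\lambda_{C_0}$ principal. The principal polarization is then uniquely determined as $\lambda_{\Xi|_{A_r}}/2$, since the Néron–Severi group is torsion free. So $(JC_0,\Theta_{C_0})$ is recovered from $(P,\Xi,r)$, and Torelli recovers $C_0$. The choice of $r$ is harmless, because all involutions in $D_d$ are conjugate by elements of $\langle\sigma\rangle$, which are automorphisms of $(P,\Xi)$ by Proposition \ref{prop:gamma}.

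The main obstacle is the identification of $\operatorname{Im}(1-r)$ on $P$ itself rather than on $J\wtC$, which needs the finiteness of $JC^{\iota}$ argument above. I expect this step to go through cleanly.
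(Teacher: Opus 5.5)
Your proposal is correct and takes the same route the paper indicates. The paper only asserts that the statement follows from $q$ being a ramified double cover; you supply the details, including the step $J\wtC=f^*JC+P$ with $j=-1$ on $f^*JC$, which gives $\operatorname{Im}(1+j|_P)=q^*JC_0$.

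There are two small wording slips. First, $\Nm_q$ is surjective for any finite cover, since $\Nm_q\circ q^*=2$; this does not depend on $q$ being ramified. Second, for a ramified double cover the defining bundle $\delta$, with $\delta^{\otimes 2}\simeq\cO(B)$, is not trivial. It has positive degree, and that is why $\Ker q^*=0$.
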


After choosing a generator $\sigma$ of the intrinsic subgroup $\mathbb{Z}/d\mathbb{Z}  \subset D_d$, the endomorphisms
\[
\beta_i:=(\sigma^i+\sigma^{-i})|_{A_r},\qquad 1\le i\le k,
\]
are intrinsic as well: $\sigma^i+\sigma^{-i}$ commutes with $j$, hence preserves $A_r$. Note that a different generator only permutes the unordered pairs $\{i,-i\}$, so we have established the following.

\begin{proposition}
    The map $\cP_g^2[d]$ is injective.
\end{proposition}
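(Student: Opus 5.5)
To prove that $\cP_g^2[d]$ is injective, I take two objects $(C_0,\{\beta_i\})$ and $(C_0',\{\beta_i'\})$ in $\cD_k$ with isomorphic images, and show that they are isomorphic in $\cD_k$. So fix a polarized isomorphism $\psi:(P,\Xi)\to(P',\Xi')$, where each side is the product abelian variety with the displayed polarization. Equivalently, each side is the Prym variety of some geometric realization, by the factorization recalled above. Since $\Gamma$ is defined only in terms of the polarization and its kernel $K(\Xi)$, conjugation by $\psi$ gives a group isomorphism $\Gamma(P,\Xi)\to\Gamma(P',\Xi')$. By Proposition~\ref{prop:gamma}, both groups are $D_d$. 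Because $d$ is odd, the unique subgroup of order $d$ is characteristic, so $\psi\langle\sigma\rangle\psi^{-1}=\langle\sigma'\rangle$.

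Next I handle the involutions. The image $\psi r\psi^{-1}$ of an involution $r=-j|_P$ is an involution of $\Gamma(P',\Xi')$ lying outside $\langle\sigma'\rangle$. All such involutions are conjugate in $D_d$, say by $\sigma'^b$. Composing $\psi$ with the polarized automorphism $\sigma'^b|_{P'}$, I may assume $\psi r\psi^{-1}=r'$. Then $\psi$ carries $A_r=\operatorname{Im}(1-r)$ isomorphically onto $A_{r'}$ and respects the restricted polarizations. By Proposition~\ref{prop:reflectionJ}, this gives an isomorphism $(JC_0,2\Theta_{C_0})\cong(JC_0',2\Theta_{C_0'})$. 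Since a polarization determines its half when the Néron–Severi group is torsion-free, this is an isomorphism of principally polarized Jacobians. Torelli's theorem then gives an isomorphism $C_0\cong C_0'$, unique up to sign on the Jacobian.

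It remains to match the endomorphisms. The isomorphism $\psi\sigma\psi^{-1}$ is a generator of $\langle\sigma'\rangle$, so it equals $\sigma'^m$ for some $m$ prime to $d$. Restricting to $A_r$ gives
\[
\psi\circ\beta_i\circ\psi^{-1}=\beta'_{\pm mi}.
\]
Hence the unordered set $\{\beta_1,\ldots,\beta_k\}$ is sent onto $\{\beta'_1,\ldots,\beta'_k\}$. The Torelli isomorphism induces $\pm\psi|_{A_r}$ on Jacobians, and the sign commutes with every endomorphism, so the same identification holds for the map induced by $C_0\cong C_0'$. This gives $(C_0,\{\beta_i\})\cong(C_0',\{\beta'_i\})$, as required.

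The main obstacle is the first step: checking that an abstract polarized isomorphism between the two products really intertwines the geometric dihedral actions. This is where Proposition~\ref{prop:gamma} is essential. Its statement that the natural map $D_d\to\Gamma$ is an isomorphism, and in particular that there are no extra automorphisms acting trivially on $K(\Xi)$, is what makes the argument work. A secondary point is to justify that $\Xi|_{A_r}=2\Theta_{C_0}$ recovers the principal polarization $\Theta_{C_0}$ itself; I will handle this by dividing the polarization by $2$ in the Néron–Severi group.
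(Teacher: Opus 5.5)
Your proposal is correct and follows the paper's own route. The paper recovers $D_d$ intrinsically through $\Gamma(P,\Xi)$ (Proposition~\ref{prop:gamma}), uses the unique cyclic subgroup of order $d$ and the conjugacy of the involutions, recovers $(JC_0,\Theta_{C_0})$ and then $C_0$ from $\Xi|_{A_r}=2\Theta_{C_0}$ (Proposition~\ref{prop:reflectionJ} and Torelli), and reads off the $\beta_i$ as restrictions of $\sigma^i+\sigma^{-i}$, up to the permutation induced by changing the generator; you additionally make explicit the transport of structure along $\psi$, the halving of the polarization, and the sign ambiguity in Torelli, which the paper leaves implicit.
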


\section{The theta-dual and the proof of Theorem \ref{thm:mainprime}}\label{sec:theta}

Fix a quotient $C_0$ as above and set
\[
n:=g(C_0)=k(g-1).
\]
The quotient by the full dihedral group $D_d$ induces a degree-$d$ map
\[
h:C_0\longrightarrow \PP^1,
\qquad A:=h^*\cO_{\PP^1}(1).
\]
Fix $x\in C_0$ and a lift $x'\in\wtC$. For $p\in C_0$, choose a lift $p'\in\wtC$ and write $p_a\in C_0$ for the image of $\sigma^a(p')$; define $x_a$ similarly. For $1\le i\le k$ put
\[
D_i:=x_i+x_{d-i}.
\]
Then
\[
\beta_i([p-x])=[p_i+p_{d-i}-D_i].
\]
For $r\ge0$, write
\[
W_r(C_0):=\{L\in\Pic^r(C_0):h^0(C_0,L)>0\},
\]
and set $W_r(C_0)=\varnothing$ for $r<0$. Define the set-theoretic theta-dual
\[
T_i':=\{\xi\in\Pic^{n-1}(C_0):\beta_i(C_0)+\xi\subset W_{n-1}(C_0)\}.
\]

\begin{proposition}\label{prop:thetaexact}
Put $b:=n+1-d=k(g-3).$ Then, set-theoretically,
\[
T_i'=
\bigl(D_i+W_{n-3}(C_0)\bigr)
\cup
\bigl(D_i+K_{C_0}-A-W_b(C_0)\bigr).
\]
The second term is the image of $W_b(C_0)$ under
\[
B\longmapsto D_i+K_{C_0}-A-B.
\]
\end{proposition}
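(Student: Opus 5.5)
We reduce membership in $T_i'$ to a statement about a pencil on $C_0$, and then use the dihedral structure to force that pencil to factor through $h$. Throughout, write $\xi=D_i+\eta$ with $\eta\in\Pic^{n-3}(C_0)$. Since $\beta_i([p-x])=[p_i+p_{d-i}-D_i]$, we have $\xi\in T_i'$ if and only if
\[
h^0\bigl(C_0,\eta+p_i+p_{d-i}\bigr)>0 \quad\text{for every } p\in C_0 .
\]

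The inclusion ``$\supseteq$'' is direct. If $\eta\in W_{n-3}(C_0)$, the condition holds trivially. For the second term, suppose $\eta=K_{C_0}-A-B$ with $B\in W_b(C_0)$. Riemann--Roch in degree $n-1$ gives $h^0(\eta+E)=h^0(K_{C_0}-\eta-E)$ for $E=p_i+p_{d-i}$. Hence it suffices that $h^0(A+B-p_i-p_{d-i})>0$. This holds because $p_i$ and $p_{d-i}$ are images of the points $\sigma^i(p')$ and $\sigma^{-i}(p')$ of a single $D_d$-orbit, so they lie in the same fibre of $h$. Thus $h^0(A-p_i-p_{d-i})\ge1$, and we add the effective divisor $B$.

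For ``$\subseteq$'', assume $\eta\notin W_{n-3}(C_0)$ and put $M:=K_{C_0}-\eta$, of degree $n+1$. Riemann--Roch gives $h^0(M)=2+h^0(\eta)=2$, so $|M|$ is a pencil. By the duality above, the condition becomes $h^0(M-p_i-p_{d-i})\ge1$ for all $p$.
\begin{itemize}[label=--]
\item Write $M=M_{\mathrm{mov}}+F$ with $F$ the fixed divisor and $\varphi:C_0\to\PP^1$ the morphism of the moving part.
\item For general $p$, neither $p_i$ nor $p_{d-i}$ lies in $F$, and $p_i\neq p_{d-i}$. The condition then says that $p_i$ and $p_{d-i}$ impose only one condition on the pencil, i.e.\ $\varphi(p_i)=\varphi(p_{d-i})$.
\item By continuity this holds for all $p$.
\end{itemize}

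The key step is that the correspondence $p_i\sim p_{d-i}$ generates the fibres of $h$. On $\wtC$ the relation is $y\sim\sigma^{2i}(y)$, together with $y\sim j(y)$ coming from the passage to $C_0$. Since $d$ is prime and $2i\not\equiv0\pmod d$, the element $\sigma^{2i}$ generates $\langle\sigma\rangle$, so the generated relation has the full $D_d$-orbits as classes. Therefore $\varphi$ is constant on the fibres of $h$, and $\varphi=\psi\circ h$ for some $\psi:\PP^1\to\PP^1$ of degree $m\ge1$. Consequently $M_{\mathrm{mov}}=mA$ and $M=mA+F$.

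Now write $M=A+B$ with $B:=(m-1)A+F$. This $B$ is effective of degree $n+1-d=b$, so $B\in W_b(C_0)$ and $\xi=D_i+K_{C_0}-A-B$ lies in the second term. (In fact $m=1$ is forced by $h^0(M)=2$, but this is not needed for the set-theoretic statement.) This is exactly where the fixed part $F$ enters, and we do not claim $d\mid\deg M$. The description of the second term as the image of $W_b(C_0)$ under $B\mapsto D_i+K_{C_0}-A-B$ is then immediate.

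The main obstacle is the factorization step: passing from the pointwise condition to $\varphi(p_i)=\varphi(p_{d-i})$ in the presence of base points and coincidences $p_i=p_{d-i}$ (which occur at ramification of $h$), and checking that the generated equivalence relation is exactly the fibres of $h$. I would handle the first issue by working on a dense open set of $p$ and taking closures, and the second by lifting to $\wtC$ and using primality of $d$.
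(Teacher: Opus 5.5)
Your proposal is correct and follows the paper's proof: reduce to the pencil $|K_{C_0}-\eta|$ with its fixed divisor, show the moving part identifies $p_i$ and $p_{d-i}$ for general $p$, use that translation by $2i$ generates $\ZZ/d\ZZ$ to factor through $h$, and read off $K_{C_0}-\eta=A+B$. Your lift to $\wtC$ via $\sigma^{2i}$- and $j$-invariance is the same step as the paper's connected graph on a general fibre, and absorbing $(m-1)A$ into $B$ legitimately replaces the paper's argument that $\deg\psi=1$. One small point to tidy: in the converse, when $p_i=p_{d-i}$ (this happens when the lift is fixed by $j$), ``same fibre'' alone does not give $h^0(A-2p_i)\ge1$, but it follows from closedness of the condition in $p$, or from $p_i$ then being a ramification point of $h$.
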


\begin{proof}
Let $\xi\in T_i'$ and put
\[
M:=\xi-D_i\in\Pic^{n-3}(C_0).
\]
By definition, for every $p\in C_0$,
\begin{equation}\label{eq:theta_condition}
h^0\bigl(C_0,M+p_i+p_{d-i}\bigr)>0.
\end{equation}
If $h^0(C_0,M)>0$, then $M\in W_{n-3}(C_0)$ and $\xi\in D_i+W_{n-3}(C_0)$.

Assume $h^0(C_0,M)=0$ and set
\[
L:=K_{C_0}-M.
\]
Since $\deg M=n-3$, the Riemann-Roch formula gives
\[
\deg L=n+1,\qquad h^0(C_0,L)=2.
\]
Applying the Riemann-Roch formula to the degree-$(n-1)$ line bundle in \eqref{eq:theta_condition} gives
\begin{equation}\label{eq:Lcondition}
h^0\bigl(C_0,L-p_i-p_{d-i}\bigr)>0
\qquad\text{for every }p\in C_0.
\end{equation}
Write the complete pencil as
\[
|L|=B+|L_0|,
\]
where $B\ge0$ is the fixed divisor and $|L_0|$ is a complete base-point-free pencil. Choose a general fiber of $h$, away from the branch locus and disjoint from $B$. It consists of $d$ distinct points $p_j$, and $p_i\ne p_{d-i}$ because $d$ is odd and $1\le i\le k$. Condition \eqref{eq:Lcondition} gives a member of the pencil $|L_0|$ containing both points; equivalently,
\[
\varphi_{L_0}(p_i)=\varphi_{L_0}(p_{d-i}).
\]
Replacing $p$ by the points $p_j$ in the same $h$-fiber yields
\[
\varphi_{L_0}(p_{j+i})=\varphi_{L_0}(p_{j-i})
\qquad(j\in\ZZ/d\ZZ).
\]
The graph on $\ZZ/d\ZZ$ whose edges join $j+i$ to $j-i$ is connected because it is generated by translation by $2i$, and $d$ is prime. Hence $\varphi_{L_0}$ is constant on a general fiber of $h$. Consequently
\[
\varphi_{L_0}=\rho\circ h
\]
for a morphism $\rho:\PP^1\to\PP^1$.

Let $r=\deg\rho$. Then
\[
L_0\simeq h^*\cO_{\PP^1}(r)=A^{\otimes r}.
\]
Pullback by the surjective map $h$ gives an injection
\[
H^0(\PP^1,\cO_{\PP^1}(r))\hookrightarrow H^0(C_0,L_0).
\]
The right-hand side has dimension two, while the left-hand side has dimension $r+1$. Since $\varphi_{L_0}$ is nonconstant, $r\ge1$, and therefore $r=1$. Thus
\[
L_0\simeq A,\qquad L\simeq A+B,
\qquad \deg B=n+1-d=b.
\]
It follows that
\[
M=K_{C_0}-A-B,
\qquad
\xi=D_i+K_{C_0}-A-B.
\]

Conversely, let $B\ge0$ have degree $b$ and put
\[
\xi:=D_i+K_{C_0}-A-B.
\]
For every $p\in C_0$, the divisor $A-p_i-p_{d-i}$ is represented by the remaining $d-2$ points, counted with multiplicity, in the $h$-fiber. Hence
\[
h^0\bigl(C_0,A+B-p_i-p_{d-i}\bigr)>0.
\]
The line bundle $K_{C_0}-A-B+p_i+p_{d-i}$ has degree $n-1$, so the Riemann-Roch formula gives
\[
h^0\bigl(C_0,K_{C_0}-A-B+p_i+p_{d-i}\bigr)>0.
\]
Thus, $\xi \in T_i'$. The inclusion $D_i+W_{n-3}(C_0)\subset T_i'$ is immediate.
\end{proof}

\begin{remark}\label{rem:fixedpart}
The factorization argument proves that $d$ divides the degree of the \emph{moving part} of $|L|$, not the degree of $L$ itself. Since
\[
b=n+1-d=k(g-3),
\]
the residual locus $D_i+K_{C_0}-A-W_b(C_0)$ is nonempty for every \(g\geq 3\). If \(g=3\), then
\(b=0\) and \(W_0(C_0)=\{\mathcal O_{C_0}\}\), so the residual locus consists
of the single point \(D_i+K_{C_0}-A\). If \(g>3\), then
\(0<b<n\), and hence \(\dim W_b(C_0)=b=k(g-3)>0\). 

It remains possible a priori that the residual locus is contained in the Brill-Noether translate. The following example shows that this need not occur. Consider $(d,g)=(5,5)$, so that $n=8$ and $b=4$. We first note that we always have $h^0(C_0,A)=2.$ Indeed, $A$ is a base-point-free line bundle of degree five, and Clifford's inequality gives $h^0(A)\leq 3$. If $h^0(A)=3$, then the complete linear series $|A|$ defines a nondegenerate morphism $C_0\to\PP^2$. Since $\deg A=5$ is prime, this morphism is birational onto a plane quintic. This is impossible, since the normalization of a plane quintic has genus at most six, whereas $g(C_0)=8$. Hence $h^0(A)=2$.

By the Riemann-Roch formula, $h^0(K_{C_0}-A)=4$. Therefore, for a general effective divisor $B$ of degree four, $h^0(K_{C_0}-A-B)=0.$ For such $B$, $$\xi=D_i+K_{C_0}-A-B$$ belongs to $T_i'$ by Proposition \ref{prop:thetaexact}, while $\xi\notin D_i+W_5(C_0),$ since $K_{C_0}-A-B$ is not effective. Thus the residual locus $$D_i+K_{C_0}-A-W_4(C_0)$$ is not contained in the Brill-Noether component $D_i+W_5(C_0)$.

\end{remark}

The following lemma is classical.

\begin{lemma}\label{lem:stabilizer}
Let $C_0$ have genus $n$, and let $0\le r\le n-1$. If
\[
W_r(C_0)+a=W_r(C_0)
\]
for some $a\in JC_0$, then $a=0$.
\end{lemma}

\begin{proof}
Adding $W_{n-1-r}(C_0)$ to both sides gives
\[
W_{n-1}(C_0)+a=W_{n-1}(C_0).
\]
The divisor $W_{n-1}(C_0)$ is a theta divisor defining the principal polarization of $JC_0$, whose translation stabilizer is trivial.
\end{proof}

\begin{corollary}\label{cor:maxcomponent}
Assume $d\ge5$ and $n\ge4$. Then $D_i+W_{n-3}(C_0)$ is the unique irreducible component of $T_i'$ of maximal dimension, and $D_i$ is uniquely determined.
\end{corollary}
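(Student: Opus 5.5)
By Proposition \ref{prop:thetaexact}, $T_i'$ is the union of two closed sets. The first, $D_i+W_{n-3}(C_0)$, is a translate of the image of $C_0^{(n-3)}$. The second, $D_i+K_{C_0}-A-W_b(C_0)$, is the image of $W_b(C_0)$ under an isomorphism of $\Pic$. The plan is to compare their dimensions, deduce that the first is the unique top-dimensional irreducible component, and then use Lemma \ref{lem:stabilizer} to pin down $D_i$.

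\emph{Dimensions.} Since $n\ge4$, we have $0\le n-3\le n-1$. Hence $W_{n-3}(C_0)$ is irreducible of dimension $n-3$, being the image of the irreducible variety $C_0^{(n-3)}$ under the Abel--Jacobi map, which is generically injective in this range.

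For the residual piece, $W_b(C_0)$ is irreducible of dimension $\min(b,n)$ when $b\ge0$, and empty when $b<0$. Here $b=n+1-d$, so for $d\ge5$ we get $b\le n-4<n-3$. Therefore every irreducible component of the residual locus has dimension at most $n-4$. Equivalently, $\dim W_b=\max(\min(b,n),-\infty)\le n-4$; this covers $g=2$, where $b<0$ and the locus is empty, and $g=3$, where the locus is a point.

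\emph{Uniqueness of the component.} Every irreducible component of $T_i'$ is contained in one of the two closed pieces. Consider a component of dimension $n-3$. It cannot lie in the residual piece, so it lies in the irreducible set $D_i+W_{n-3}(C_0)$, which has the same dimension; hence it equals that set. Conversely, $D_i+W_{n-3}(C_0)$ is irreducible, of dimension $n-3$, and strictly larger in dimension than the other piece. It is therefore not contained in the other piece and is a component, and it is the unique component of maximal dimension $n-3$.

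\emph{$D_i$ is determined.} The set $T_i'$ depends only on $(C_0,\beta_i)$, and $W_{n-3}(C_0)$ depends only on $C_0$. Suppose $D_i+W_{n-3}=D'+W_{n-3}$ for some other candidate $D'$. Then $W_{n-3}+(D_i-D')=W_{n-3}$, and Lemma \ref{lem:stabilizer} with $r=n-3\in[0,n-1]$ gives $D_i=D'$ in $\Pic^2(C_0)$.

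The only step needing care is the dimension bound for the residual piece when $b$ is small or negative, together with making sure that no component of the residual locus could coincide with the main one. Both follow from the strict inequality $b<n-3$, which is exactly where $d\ge5$ is used. For $d=3$ one would have $b=n-2>n-3$, and the argument fails, consistent with the discussion in the introduction.
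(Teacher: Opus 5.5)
Your proposal is correct and follows essentially the same route as the paper: you show the residual locus has dimension at most $b=n+1-d\le n-4$ (and is empty if $b<0$), so the irreducible $(n-3)$-dimensional translate $D_i+W_{n-3}(C_0)$ is the unique top-dimensional component, and Lemma~\ref{lem:stabilizer} with $r=n-3$ then pins down $D_i$. One small imprecision: $T_i'$ also depends on the base point $x$ of the Abel--Jacobi embedding, not only on $(C_0,\beta_i)$; this is harmless because $x$ is fixed throughout.
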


\begin{proof}
The variety $W_{n-3}(C_0)$ is irreducible of dimension $n-3$. If $b<0$, the residual locus is empty. If $b\ge0$, it is an image of $W_b(C_0)$ and therefore has dimension at most
\[
b=n+1-d.
\]
Since
\[
(n-3)-(n+1-d)=d-4>0,
\]
the first locus is the unique maximal-dimensional component. Uniqueness of the translation follows from Lemma \ref{lem:stabilizer} with $r=n-3$.
\end{proof}

The following observation is essentially \cite[Remark 4.4]{NOPS}. We include the short argument for completeness.

\begin{lemma}\label{lem:effectiveDi}
Assume $g(C_0) \geq 2$. For a general choice of $x\in C_0$, each class $D_i\in\Pic^2(C_0)$ has a unique effective representative.
\end{lemma}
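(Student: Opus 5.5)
The plan is to reduce the statement to the behaviour of the finitely many points \(x_a\) as \(x\) varies. The class \(D_i=x_i+x_{d-i}\) has degree two, so it has a unique effective representative exactly when \(h^0(C_0,\cO(x_i+x_{d-i}))=1\). If this fails, then \(x_i+x_{d-i}\) lies in a \(g^1_2\) on \(C_0\). For \(g(C_0)\ge2\) this can only happen if \(C_0\) is hyperelliptic and \(x_{d-i}=\tau(x_i)\), where \(\tau\) is the hyperelliptic involution of \(C_0\). So when \(C_0\) is not hyperelliptic there is nothing to prove. We may therefore assume \(C_0\) is hyperelliptic and must show that \(x_{d-i}\neq\tau(x_i)\) for general \(x\) and each \(1\le i\le k\).

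The key step is to view \(x\mapsto x_i\) and \(x\mapsto x_{d-i}\) through the correspondence \(Z_i\subset C_0\times C_0\). Here \(Z_i\) is the image of \(\wtC\) under \(p'\mapsto(q(p'),q(\sigma^i p'))\), and it is exactly the curve \(\{(p,p_i)\}\), which is symmetric because \(p_{d-i}\) arises from the other lift \(jp'\). Since \(j\sigma^i p'=\sigma^{-i}jp'\), the unordered pair \(\{x_i,x_{d-i}\}\) is well defined, and so is the degree-two divisor \(x_i+x_{d-i}\), which is the pushforward to \(C_0\) of the fibre over \(x\) of the second projection. The bad condition \(x_{d-i}=\tau(x_i)\) is a closed condition on \(x\). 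If it held identically, then for every \(p'\in\wtC\) we would have \(q(\sigma^{-i}p')=\tau q(\sigma^i p')\). Replacing \(p'\) by \(\sigma^i p'\), this becomes \(q(p')=\tau q(\sigma^{2i}p')\) for all \(p'\).

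From this identity we derive a contradiction. Lift \(\tau\) to \(\wtC\), if it lifts, or argue directly on \(C_0\). The identity says that the automorphism \(\sigma^{2i}\) of \(\wtC\) descends, up to \(j\), to the map \(\tau\) on \(C_0\). In particular, for each \(p'\) the fibre \(q^{-1}(\tau q(p'))=\{\sigma^{2i}p',\,j\sigma^{2i}p'\}\). Applying the identity twice gives \(q(p')=q(\sigma^{4i}p')\), so \(\sigma^{4i}p'\in\{p',jp'\}\) for all \(p'\). The action of \(\sigma\) is free and \(d\) is odd, so \(\sigma^{4i}\ne 1\). Hence \(\sigma^{4i}=j\) on a dense set, which is impossible because \(\sigma^{4i}\) has odd order while \(j\) is an involution.

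The main obstacle is making the incidence argument rigorous: we must check that the pair \(\{x_i,x_{d-i}\}\) depends algebraically on \(x\) (via \(Z_i\)) and handle the finitely many ramification points of \(q\). These two points only affect a closed subset of \(x\), so they do not interfere with a "general \(x\)" statement. Finally, finiteness of \(i\in\{1,\dots,k\}\) lets us intersect the finitely many open conditions.
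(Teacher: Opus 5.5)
Your proof is correct, but it takes a different route from the paper's.

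The paper argues on the Jacobian. If $D_i(x)$ lay in the $g^1_2$ for general $x$, all the classes $D_i(x)$ would be linearly equivalent. Then $\beta_i([p-x])=[D_i(p)-D_i(x)]=0$ on the Abel--Jacobi curve, so $\beta_i=0$ on $JC_0$. This contradicts the fact that $\beta_i$ is an automorphism \cite[Proposition 4.1]{NOPS}.

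You instead stay on $\wtC$ and use the hyperelliptic involution $\tau$ explicitly. The identity $x_{d-i}=\tau(x_i)$ for all $x$ becomes $\tau\circ q=q\circ\sigma^{2i}$. Iterating gives $\sigma^{4i}p'\in\{p',jp'\}$ for every $p'$. Since the deck action is free and $\sigma^{4i}$ has order $d\ne2$, this is a contradiction.

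Your version is self-contained. It needs only that $\langle\sigma\rangle$ acts freely and that $j\sigma j=\sigma^{-1}$; it never uses that $\beta_i$ is invertible, or even nonzero. It also exhibits the bad locus directly as the image under $q$ of a proper closed subset of $\wtC$. The paper's version is shorter once the Jacobian input is in place, and it reuses the formula for $\beta_i$ that drives the theta-dual reconstruction.

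The ``obstacle'' you flag at the end is not really one. The bad set is $q\bigl(\{p' : q(\sigma^{-i}p')=\tau q(\sigma^{i}p')\}\bigr)$, which is closed because $q$ is finite. So neither the algebraicity of $x\mapsto\{x_i,x_{d-i}\}$ nor the ramification of $q$ needs separate treatment.
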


\begin{proof}
Suppose otherwise. Since $D_i$ has degree two, $h^0(C_0,D_i)\ge2$ means that $C_0$ is hyperelliptic and $D_i$ belongs to its unique $g^1_2$. If this occurred for a general $x$ for some fixed $i$, then as $x$ varies in a dense open set the divisors
\[
D_i(x)=x_i+x_{d-i}
\]
would all be linearly equivalent. But
\[
\beta_i([p-x])=[D_i(p)-D_i(x)],
\]
so $\beta_i$ would vanish on the Abel-Jacobi curve and hence on $JC_0$. This is impossible as $\beta_i$ is an automorphism of $JC_0$ for all $i$.
\end{proof}

We now prove Theorem \ref{thm:mainprime}.

\begin{proof}[Proof of Theorem \ref{thm:mainprime}]
Since $d$ is odd, $(d-1)(g-1)$ is even; hence the hypothesis $(d-1)(g-1)\ge7$ actually forces $(d-1)(g-1)\ge8$. Thus
\[
n=\frac{(d-1)(g-1)}2\ge4.
\]
Starting from the polarized Prym $(P,\Xi)$, Proposition \ref{prop:gamma} recovers the intrinsic dihedral group $D_d$ and its unique cyclic subgroup of order $d$. Choose an involution $r$ and a generator $\sigma$. Proposition \ref{prop:reflectionJ} recovers the principally polarized Jacobian $(JC_0,\Theta_{C_0})$ and hence $C_0$, while the maps $\sigma^i+\sigma^{-i}$ recover $\beta_1,\ldots,\beta_k$.

Choose a general point $x\in C_0$ and use it for the Abel-Jacobi embedding. The sets $T_i'$ are then intrinsic. By Corollary \ref{cor:maxcomponent}, their unique maximal-dimensional components recover the divisor classes
\[
D_i=x_i+x_{d-i}\qquad(1\le i\le k).
\]
By Lemma \ref{lem:effectiveDi}, for general $x$ these classes recover the actual effective divisors. Therefore we recover
\[
h^{-1}(h(x))
=x+D_1+\cdots+D_k.
\]
Since $x$ can be chosen arbitrarily in a dense open subset of $C_0$, the construction recovers the general fibers of $h$ and hence the degree-$d$ morphism
\[
h:C_0\to\PP^1
\]
up to an automorphism of $\PP^1$. The Galois closure of $h$ is $\wtC\to\PP^1$ with group $D_d$. The unique cyclic subgroup of order $d$ then recovers
\[
C=\wtC/\langle\sigma\rangle
\]
and the original cyclic cover $\wtC\to C$. Thus the Prym map is injective.
\end{proof}

\subsection*{Exceptional case $d=3$}

Now, let us study the case $d=3$.
We have $k=1$, $n=g-1$, and
\[
h^{-1}(h(x))=x+x_1+x_2,
\qquad
D_1=x_1+x_2\sim A-x.
\]
It turns out that the residual locus of Proposition \ref{prop:thetaexact} is no longer smaller than the expected component.

\begin{proposition}\label{prop:d3theta}
Assume $d=3$ and $n\ge4$. Then
\[
T_1'=K_{C_0}-x-W_{n-2}(C_0)
\]
set-theoretically. 
\end{proposition}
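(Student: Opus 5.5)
With $d=3$ we have $k=1$, $b=n+1-3=n-2$ and $D_1\sim A-x$, so I will derive the statement by substituting into Proposition~\ref{prop:thetaexact}. That proposition gives, set-theoretically,
\[
T_1'=\bigl(D_1+W_{n-3}(C_0)\bigr)\cup\bigl(D_1+K_{C_0}-A-W_{n-2}(C_0)\bigr).
\]
Since $D_1-A\sim -x$, the second term is exactly $K_{C_0}-x-W_{n-2}(C_0)$. The whole claim therefore reduces to showing that the Brill--Noether translate is absorbed by the residual locus:
\[
A-x+W_{n-3}(C_0)\subset K_{C_0}-x-W_{n-2}(C_0).
\]

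To prove this inclusion I will take an effective $E$ of degree $n-3$ and look at $B:=K_{C_0}-A-E$, which has degree $2n-2-3-(n-3)=n-2$. I need $B$ to be effective. By Riemann--Roch, $h^0(K_{C_0}-A)=n-3+h^0(A)\ge n-1$. Imposing the $n-3$ points of $E$ then leaves $h^0(K_{C_0}-A-E)\ge 2>0$, since each point imposes at most one condition, and this also handles multiplicities in $E$. Hence $B\in W_{n-2}(C_0)$, and
\[
A-x+E=K_{C_0}-x-B,
\]
which is the required inclusion.

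Before this I will check that $h^0(A)\ge 2$, which is immediate because $A=h^*\cO_{\PP^1}(1)$ is a pencil. No argument like the one in Remark~\ref{rem:fixedpart} is needed here, since only the lower bound enters. The hypothesis $n\ge4$ ensures $n-3\ge1$ and $b=n-2\ge 2$, so both $W$'s are genuine loci and Proposition~\ref{prop:thetaexact} applies without degenerate conventions.

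The main thing to get right is the bookkeeping of the identification $D_1\sim A-x$, namely that the $h$-fiber through $x$ is $x+x_1+x_2$, together with the Riemann--Roch count. Once $h^0(K_{C_0}-A)\ge n-1$ is established, everything else is formal.
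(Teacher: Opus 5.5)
Your argument is the paper's argument. You reduce via Proposition~\ref{prop:thetaexact}, rewrite the residual term using $D_1\sim A-x$, and absorb $D_1+W_{n-3}(C_0)$ into the residual locus by showing that $K_{C_0}-A-E$ is effective for every effective $E$ of degree $n-3$.

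There is one arithmetic slip, and it does not affect the conclusion. Riemann--Roch gives $h^0(K_{C_0}-A)=h^0(A)+n-4\ge n-2$, not $n-3+h^0(A)\ge n-1$. Imposing $E$ therefore only yields $h^0(K_{C_0}-A-E)\ge 1$; your claimed bound $\ge 2$ is false, since for general $E$ the value is exactly $1$. The bound $\ge 1$ is all you need. The paper obtains it more directly by applying Riemann--Roch to the degree-$n$ bundle $A+E$ together with $h^0(A+E)\ge 2$.
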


\begin{proof}
Proposition \ref{prop:thetaexact} gives
\[
T_1'=\bigl(D_1+W_{n-3}(C_0)\bigr)
\cup
\bigl(D_1+K_{C_0}-A-W_{n-2}(C_0)\bigr).
\]
Since $D_1\sim A-x$, the second locus equals
\[
K_{C_0}-x-W_{n-2}(C_0).
\]
It remains to show that it contains the first one. Let $E\ge0$ have degree $n-3$. Since $A$ is a trigonal pencil,
\[
h^0(C_0,A+E)\ge2,
\qquad
\deg(A+E)=n.
\]
The Riemann-Roch formula gives
\[
h^0(C_0,K_{C_0}-A-E)\ge1.
\]
Choose $B\ge0$ with
\[
B\sim K_{C_0}-A-E,
\qquad \deg B=n-2.
\]
Then
\[
D_1+E\sim A-x+E\sim K_{C_0}-x-B,
\]
which proves
\[
D_1+W_{n-3}(C_0)\subset K_{C_0}-x-W_{n-2}(C_0).
\]
\end{proof}

\begin{remark}
This explains exactly why the divisibility step in \cite[Proposition 4.3]{NOPS} fails in degree three. A line bundle
\[
L=A+B
\]
with fixed divisor $B$ defines the same map as the moving pencil $|A|$, so its associated morphism factors through $h$ although $3$ need not divide $\deg L$. Only the degree of the moving part is forced to be divisible by $3$.
\end{remark}

\section{Triple covers and trigonal pencils}\label{sec:d3corr}

For $d=3$ we have a much simpler description of the polarized Prym variety.

\begin{proposition}\label{prop:cubicfactor}
Let $d=3$. There is an isomorphism of polarized abelian varieties
\[
(P,\Xi)\simeq
\left(
JC_0\times JC_0,
\begin{pmatrix}
2\lambda_{C_0}&-\lambda_{C_0}\\
-\lambda_{C_0}&2\lambda_{C_0}
\end{pmatrix}
\right).
\]
Consequently the polarized Prym depends only on $(JC_0,\Theta_{C_0})$. Conversely, $(P,\Xi)$ determines $C_0$.
\end{proposition}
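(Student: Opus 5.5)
The proof reduces to the factorization of $\mathcal P_g[d]$ recalled in Section~\ref{sec:intrinsic}, together with a computation of $\beta_1$ in degree three. By the description of $\cP_g^2[d]$, the polarized Prym is isomorphic to $JC_0\times JC_0$ with polarization matrix
\[
\begin{pmatrix}2\lambda_{C_0}&\lambda_{C_0}\beta_1\\ \lambda_{C_0}\beta_1&2\lambda_{C_0}\end{pmatrix},
\]
since $k=1$ and only $\beta_1=\sigma+\sigma^{-1}$ occurs. It therefore suffices to show that $\beta_1=-1$ on $JC_0$, viewed as $q^*JC_0=A_r\subset P$.

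To see this, I would use that $1+\sigma+\sigma^2$ kills $P$. For $y\in J\wtC$ we have $(1+\sigma+\sigma^2)y=f^*\Nm_f(y)$. On $P=(\Ker\Nm_f)^0$ the norm vanishes, so $1+\sigma+\sigma^{-1}=0$ on $P$, i.e.\ $\sigma+\sigma^{-1}=-1$ on $P$. By Proposition~\ref{prop:reflectionJ}, $A_r=q^*JC_0\subset P$, and $\beta_1$ is the restriction of $\sigma+\sigma^{-1}$ to $A_r$, transported to $JC_0$ by the injective map $q^*$. Hence $\beta_1=-1_{JC_0}$, and the matrix takes the displayed form. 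This also agrees with the pointwise formula $\beta_1([p-x])=[p_1+p_2-x_1-x_2]$: since $p+p_1+p_2\sim A\sim x+x_1+x_2$, it equals $-[p-x]$. Either argument suffices, and I would include the second as a check.

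The displayed polarized abelian variety is built from $(JC_0,\lambda_{C_0})$ alone, so $(P,\Xi)$ depends only on the principally polarized Jacobian; by Torelli, only on $C_0$. For the converse, Proposition~\ref{prop:gamma} recovers $D_3\simeq\Gamma(P,\Xi)$ intrinsically. Choosing any involution $r$, Proposition~\ref{prop:reflectionJ} gives $(A_r,\Xi|_{A_r})=(JC_0,2\Theta_{C_0})$, hence $(JC_0,\Theta_{C_0})$, and $C_0$ follows by Torelli. The choice of involution is irrelevant because all involutions of $D_3$ are conjugate by elements of $\Gamma(P,\Xi)$, and these are polarized automorphisms of $P$, so the resulting subvarieties are isomorphic.

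The only delicate point is that the isomorphism of \cite[Section 4]{NOPS} matches the polarizations, in particular the sign convention: the off-diagonal entry must be $\lambda_{C_0}\beta_1$ rather than its negative. I would take this from \cite{NOPS} as recalled in Section~\ref{sec:intrinsic}. Substituting $\beta_1=-1$ then gives the off-diagonal entry $-\lambda_{C_0}$.
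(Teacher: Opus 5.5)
Your proposal is correct and follows the paper's proof. Both arguments obtain $\beta_1=-1$ from $1+\sigma+\sigma^2=f^*\Nm_f$ vanishing on $P$, substitute this into the matrix description from $\cP_g^2[3]$, and recover $C_0$ via Propositions \ref{prop:gamma} and \ref{prop:reflectionJ} together with Torelli. Your pointwise check via $p+p_1+p_2\sim A$ and your remark that the choice of involution is irrelevant are valid additions that the paper does not include.
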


\begin{proof}
Recall that $P$ is isomorphic to $JC_0\times JC_0$ with the polarization given by
\[
\begin{pmatrix}
2\lambda_{C_0}&\lambda_{C_0}\beta_1\\
\lambda_{C_0}\beta_1&2\lambda_{C_0}
\end{pmatrix}.
\]
For $d=3$ one has
\[
(1+\sigma+\sigma^2)_{|P}=0_{|P}.
\]
Indeed, on $J\wtC$ one has $f^*\Nm_f=1+\sigma+\sigma^2$, while $\Nm_f$ vanishes on $P=\ker(\Nm_f)^0$. Hence
\[
\beta_1=\sigma+\sigma^{-1}=\sigma+\sigma^2=-1.
\]
This proves the displayed formula and shows that the polarized Prym depends only on $C_0$. The converse follows from Proposition \ref{prop:reflectionJ}: from $(P,\Xi)$ one recovers the Jacobian $(JC_0,\Theta_{C_0})$ and then $C_0$ by the Torelli theorem.
\end{proof}

There is also a direct converse to the dihedral construction. 

\begin{proposition}[Trigonal-dihedral correspondence]\label{prop:trigonalcorrespondence}
Fix $g\ge2$ and put $n=g-1$. Isomorphism classes of connected \'etale cyclic triple covers of hyperelliptic genus-$g$ curves are naturally equivalent to isomorphism classes of simply branched degree-three maps
\[
h:C_0\longrightarrow\PP^1
\]
with $g(C_0)=n$.
\end{proposition}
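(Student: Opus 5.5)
We construct the correspondence in both directions and then check that the two constructions are mutually inverse.

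\emph{From covers to trigonal maps.} Start with a connected étale cyclic triple cover $f:\wtC\to C$, where $C$ is hyperelliptic of genus $g$. The hyperelliptic involution lifts to $j$, giving the dihedral group $D_3\simeq S_3$ acting on $\wtC$. The quotient map $\wtC\to\wtC/D_3\simeq\PP^1$ factors as $h:C_0=\wtC/\langle j\rangle\to\PP^1$, which has degree $3$.

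\begin{enumerate}[label=\textup{(\alph*)}]
\item \textbf{Genus of $C_0$.} Riemann--Hurwitz gives $g(\wtC)=3g-2$. Since $f$ is étale, the fixed points of $j$ are exactly the $3\cdot(2g+2)$ lifts of the Weierstrass points. Indeed, each Weierstrass point has $3$ preimages, permuted by $\sigma$, and the three involutions $j\sigma^a$ are all conjugate. A local check gives the precise statement: over each Weierstrass point exactly one preimage is fixed by each reflection. Hence $j$ has $2g+2$ fixed points, and $2(3g-2)-2=2(2g(C_0)-2)+2g+2$ yields $g(C_0)=g-1=n$.
\item \textbf{Simple branching.} Over a branch point of $\wtC\to\PP^1$, the stabilizers are the reflection subgroups of order $2$; no point has stabilizer of order $3$ or $6$, since the cyclic group acts freely. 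The fiber of $h$ over such a point has type $(2,1)$, so $h$ is simply branched. One can also count: $2g+2=2n+4$ branch points, matching Riemann--Hurwitz for a degree-$3$ map from a genus-$n$ curve.
\item \textbf{Galois closure.} The Galois closure of $h$ is $\wtC$, because the core of $\langle j\rangle$ in $S_3$ is trivial.
\end{enumerate}

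\emph{From trigonal maps to covers.} Given a simply branched $h:C_0\to\PP^1$ with $g(C_0)=n$, take its Galois closure $\wtC\to\PP^1$, with monodromy group $G\subseteq S_3$ transitive.

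\begin{enumerate}[label=\textup{(\alph*)}]
\item \textbf{The monodromy group is $S_3$.} Simple branching means the local monodromies are transpositions. There are $2n+4>0$ of them, which rules out $G=A_3$, so $G=S_3$.
\item \textbf{The cyclic quotient is étale.} Set $C:=\wtC/A_3$. Then $C\to\PP^1$ is the double cover branched exactly at the $2n+4$ branch points, since transpositions are odd. So $C$ is hyperelliptic of genus $n+1=g$. The cover $\wtC\to C$ is cyclic of degree $3$ and étale, because the inertia groups have order $2$ and meet $A_3$ trivially.
\item \textbf{Connectedness and the lift.} $\wtC$ is connected because it is a Galois closure. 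A transposition in $S_3$ induces the hyperelliptic involution on $C$.
\end{enumerate}

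\emph{Mutual inverseness and functoriality.} The two constructions are inverse because Galois closure and quotient by a point stabilizer are inverse operations. For the reverse composition, $\wtC$ is recovered from $C_0$ as the Galois closure, and the choice of reflection is irrelevant up to isomorphism since all reflections are conjugate. Isomorphisms correspond on both sides, with $h$ taken up to $\Aut(\PP^1)$. An isomorphism of covers commutes with the lifted involutions up to conjugation, because the lift $j$ is unique up to composing with $\sigma^a$.

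The main obstacle is the local computation at the Weierstrass points in step (a) of the first direction. One must show that each reflection has exactly one fixed point in each $3$-point fiber and that the stabilizers have order exactly $2$. I would handle this using the fact that $D_3$ acts on the $3$-element fiber through the permutation action, in which every reflection fixes exactly one element.
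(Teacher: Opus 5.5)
Your proposal is correct and follows essentially the same route as the paper: the $D_3\simeq S_3$ Galois closure, the quotient by $A_3$, étaleness because the order-two inertia groups meet $A_3$ trivially, and Riemann--Hurwitz for the double cover branched at the $2n+4$ points. Your forward-direction computations of $g(C_0)$ and of the $(2,1)$ fiber type fill in steps the paper leaves implicit. One wording slip: the fixed points of $j$ are not all $3(2g+2)$ lifts of the Weierstrass points; they lie among those lifts, exactly one in each fiber, which is what your subsequent local check gives.
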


\begin{proof}
Starting with an \'etale cyclic triple cover $\wtC\to C$, the composite
\[
\wtC\to C\to\PP^1
\]
is Galois with group
$D_3\simeq S_3.$ Then $h:C_0=\wtC/\langle j\rangle\to\PP^1$ is a simply branched cover of degree 3.

Conversely, let $h:C_0\to\PP^1$ be a connected simply branched triple cover. Its monodromy is a transitive subgroup of $S_3$ containing a transposition, hence is all of $S_3$. Let
\[
\wtC\to\PP^1
\]
be its Galois closure and put
\[
C:=\wtC/A_3,
\]
where $A_3\simeq\ZZ/3\ZZ$ is the unique subgroup of order three. Every inertia group is generated by a transposition and therefore intersects $A_3$ trivially. Thus
\[
\wtC\to C
\]
is an \'etale cyclic triple cover.

If $g(C_0)=n$, the Riemann-Hurwitz for the simply branched triple cover gives $2n+4$ branch points. The double cover $C\to\PP^1$ is branched at those same points, hence
\[
g(C)=\frac{(2n+4)-2}{2}=n+1=g.
\]
Thus $C$ is hyperelliptic of genus $g$. The constructions are inverse up to isomorphism.
\end{proof}

Let
\[
\Psi_g:\RH_g[3]\longrightarrow  \cM_{g-1},
\qquad
[f]\longmapsto[C_0]
\]
be the associated map. Proposition \ref{prop:trigonalcorrespondence} shows that the closure of its image is the trigonal locus; this is also the description used by Albano-Pirola \cite{AP}. Proposition \ref{prop:cubicfactor} gives a factorization
\[
\mathcal P_{g}[3]=\Phi_{g-1}\circ\Psi_g
\]
where
\[
\Phi_n(C_0)=
\left(
JC_0\times JC_0,
\begin{pmatrix}
2\lambda_{C_0}&-\lambda_{C_0}\\
-\lambda_{C_0}&2\lambda_{C_0}
\end{pmatrix}
\right).
\]
Moreover, $\Phi_n$ is injective on the image of $\Psi_g$ because the polarized abelian variety recovers $C_0$ by Proposition \ref{prop:cubicfactor}. Thus the fibers of $\cP_g[3]$ are exactly the fibers of $\Psi_g$.

Let $\mathcal H^{\mathrm{sb}}_{3,n}$ denote the Hurwitz space of simply branched degree-three maps $h:C_0\to\PP^1$ with $g(C_0)=n$, modulo isomorphisms of source and target. Let
\[
\nu_n:\mathcal H^{\mathrm{sb}}_{3,n}\longrightarrow \mathcal M_n
\]
be the forgetful map for $n\ge2$. For $n=1$, we interpret the target as
the coarse $j$-line parametrizing smooth genus-one curves. We summarise what we have achieved so far in the following.

\begin{theorem}\label{thm:cubicfactorization}
Put $n=g-1$. The trigonal-dihedral construction gives a natural identification on geometric points
\[
\RH_g[3]\simeq \mathcal H^{\mathrm{sb}}_{3,n}.
\]
Under this identification $\cP_g[3]$ factors as
\[
\mathcal H^{\mathrm{sb}}_{3,n}
\xrightarrow{\ \nu_n\ }
\mathcal M_n
\xrightarrow{\ \Phi_n\ }
\mathcal A^{\delta}_{2n},
\]
where
\[
\Phi_n(C_0)=
\left(
JC_0\times JC_0,
\begin{pmatrix}
2\lambda_{C_0}&-\lambda_{C_0}\\
-\lambda_{C_0}&2\lambda_{C_0}
\end{pmatrix}
\right).
\]
The map $\Phi_n$ is injective on the image of $\nu_n$. Consequently each fiber of $\cP_g[3]$ is naturally identified with a fiber of $\nu_n$, i.e. with the set of isomorphism classes of simply branched trigonal maps having the same source curve $C_0$.
\end{theorem}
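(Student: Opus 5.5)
The plan is to assemble the statement from the two constructions already proved, Proposition \ref{prop:trigonalcorrespondence} and Proposition \ref{prop:cubicfactor}, and to check that they are compatible with the Prym map.

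\emph{Step 1: the identification $\RH_g[3]\simeq\mathcal H^{\mathrm{sb}}_{3,n}$.} I will define the map $[f:\wtC\to C]\mapsto[h:C_0=\wtC/\langle j\rangle\to\PP^1]$. This is well defined on isomorphism classes for two reasons.
\begin{enumerate}[label=\textup{(\alph*)}]
\item The three lifts of the hyperelliptic involution are the involutions of $D_3$, and they are conjugate by powers of $\sigma$. Hence the resulting quotients $C_0$, together with their maps to $\PP^1$, are isomorphic.
\item An isomorphism of cyclic covers commutes with the hyperelliptic involutions. It therefore intertwines the dihedral actions, and so descends to an isomorphism of the trigonal maps, compatible with an automorphism of $\PP^1$.
\end{enumerate}
The inverse sends $h$ to the Galois closure $\wtC\to\PP^1$ followed by the quotient by $A_3$. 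This uses that simple branching forces the monodromy to be $S_3$ and makes $\wtC\to C$ étale, and the Riemann--Hurwitz count gives $g(C)=g$.

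To see that the two constructions are mutually inverse, I will argue as follows.
\begin{itemize}
\item Starting from $f$: the composite $\wtC\to\PP^1$ is Galois with group $D_3$ and factors through $C_0$. Its degree, $6=2\cdot 3$, equals the degree of the Galois closure of $h$. So $\wtC$ is that Galois closure, and $A_3=\langle\sigma\rangle$ recovers $C$.
\item Starting from $h$: the quotient of the Galois closure by a transposition is isomorphic to $C_0$ over $\PP^1$.
\end{itemize}
Naturality on geometric points then only requires that isomorphisms are transported in both directions, which holds by functoriality of the Galois closure.

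\emph{Step 2: the factorization.} By Proposition \ref{prop:cubicfactor}, $\cP_g[3]([f])$ is isomorphic to $\Phi_n(C_0)$. Since $C_0$ is exactly the source of the trigonal map attached to $f$ in Step 1, this gives $\cP_g[3]=\Phi_n\circ\nu_n$ under the identification.

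\emph{Step 3: injectivity of $\Phi_n$ on $\operatorname{Im}\nu_n$.} Let $(P,\Xi)=\Phi_n(C_0)$ with $C_0$ in the image of $\nu_n$. Then $(P,\Xi)$ is a genuine Prym of a cyclic triple cover. Proposition \ref{prop:gamma} recovers the dihedral group intrinsically, and Proposition \ref{prop:reflectionJ} recovers $(JC_0,\Theta_{C_0})$. Torelli then recovers $C_0$; for $n=1$ one uses instead the elliptic curve $JC_0\cong C_0$, read off through its $j$-invariant.

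The fiber statement is then formal. Two covers have the same Prym exactly when they have the same $\nu_n$-image, so each fiber of $\cP_g[3]$ is a fiber of $\nu_n$.

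The main obstacle is Step 1, namely making sure that ``isomorphism classes'' match on both sides. On the Hurwitz side isomorphisms are taken modulo the source and the target. On the $\RH_g[3]$ side we identify $(C,\langle\eta\rangle)$, and the choice of generator of $\langle\eta\rangle$ is forgotten. I would check that an automorphism of $\PP^1$ preserving the branch set lifts to $C$ and to $\wtC$, so that these two equivalence relations coincide. It also needs care that a different lift $j$ yields the same class in $\mathcal H^{\mathrm{sb}}_{3,n}$; this is the conjugacy argument in (a).
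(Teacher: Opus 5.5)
Your proposal is correct and follows the paper's proof. The identification comes from Proposition~\ref{prop:trigonalcorrespondence}, the factorization from Proposition~\ref{prop:cubicfactor}, and injectivity of $\Phi_n$ on the image from Propositions~\ref{prop:gamma} and~\ref{prop:reflectionJ} plus Torelli; your conjugacy argument for the three lifts of the hyperelliptic involution, the transport of isomorphisms via the Galois closure, and the $n=1$ remark fill in details the paper leaves implicit.

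One small correction to your closing paragraph: an arbitrary automorphism of $\PP^1$ preserving the branch set lifts to $C$ but need not lift to $\wtC$, since it may move $\langle\eta\rangle$ to another subgroup of order three. The check you actually need is only that automorphisms induced by an isomorphism of the data lift, and that is exactly the functoriality of the Galois closure you already invoked.
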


\begin{proof}
Proposition \ref{prop:trigonalcorrespondence} gives the identification between cyclic triple covers and simply branched trigonal covers. Proposition \ref{prop:cubicfactor} gives the displayed expression for the polarized Prym and shows that it depends only on $C_0$. Conversely, Proposition \ref{prop:reflectionJ}, together with the intrinsic dihedral group of Proposition \ref{prop:gamma}, recovers the principally polarized Jacobian $(JC_0,\Theta_{C_0})$ from a polarized Prym in the image of the Prym map. The Torelli Theorem then recovers $C_0$. Hence two points in the Hurwitz space have the same Prym if and only if they have isomorphic source curves.
\end{proof}

We now count the trigonal pencils on the general curve occurring in the image of $\Psi_g$. The following dimension count is essentially due to Albano-Pirola: case $n=3$ is
\cite[Section~5]{AP}, where the fiber is identified with an open subset of $C_0$
itself, and case $n=2$ is \cite[Section~6]{AP}; case $n=1$ is immediate from the
dimension count in the proof of \cite[Proposition~2.7]{AP}. We include the short
arguments for completeness.

\begin{lemma}\label{lem:lowgenus}
The general fibers of $\nu_n$ have dimensions
\[
\dim \nu_1^{-1}(C_0)=2,\qquad
\dim \nu_2^{-1}(C_0)=2,\qquad
\dim \nu_3^{-1}(C_0)=1.
\]
Equivalently, the general fibers of $\mathcal P_{g}[3]$ have dimensions $2,2,1$
for $g=2,3,4$, respectively.
\end{lemma}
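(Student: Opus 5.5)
By Theorem~\ref{thm:cubicfactorization}, the fiber of $\cP_g[3]$ over the Prym of a cover with quotient $C_0$ is the fiber $\nu_n^{-1}(C_0)$. So it suffices to compute, for a general trigonal curve $C_0$ of genus $n\in\{1,2,3\}$, the dimension of the family of simply branched degree-three maps $C_0\to\PP^1$, modulo $\Aut(\PP^1)$ and automorphisms of $C_0$.

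Such a map is the same as a pair $(A,V)$ with $A\in\Pic^3(C_0)$ and $V\subset H^0(C_0,A)$ a base-point-free pencil; simple branching is an open condition. I will therefore compute $\dim\{(A,V)\}$ for general $C_0$, note that the space of such pencils is irreducible (or at least equidimensional) in each case, and check that simple branching is generic. For simple branching it suffices to exhibit one simply branched map, or to use that the Hurwitz space is irreducible of dimension $2n+4-3=2n+1$, so the general fiber has dimension $2n+1-\dim\nu_n(\mathcal H^{\mathrm{sb}}_{3,n})$.

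The case-by-case computation:

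\begin{enumerate}[label=\textup{(\alph*)}]
\item $n=1$. Every $A\in\Pic^3(C_0)$ has $h^0=3$ and is very ample, so pencils are points of $\mathrm{Gr}(2,3)\simeq\PP^2$. This gives a family of dimension $1+2=3$. Translations of the elliptic curve form a one-dimensional automorphism group acting on the line bundles (it identifies all $A$ of degree $3$ up to the finite torsion ambiguity), so the fiber has dimension $3-1=2$. The Hurwitz count agrees: $2\cdot1+1=3$ minus $\dim$ of the $j$-line, which is $1$, gives $2$.
\item $n=2$. By Riemann--Roch, $h^0(A)=2$ for every $A\in\Pic^3(C_0)$. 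The pencil is base-point-free unless $A=K_{C_0}+p$, which is a one-dimensional locus. So the fiber is an open subset of $\Pic^3(C_0)$ of dimension $2$. Check: $5-3=2$.
\item $n=3$. For general $C_0$ (non-hyperelliptic), $W^1_3(C_0)=\{K_{C_0}-p:p\in C_0\}$, since projection from $p$ on the canonical plane quartic gives the trigonal maps. Each such $A$ has $h^0=2$, so the fiber is an open subset of $C_0$ and has dimension $1$. Check: $7-6=1$.
\end{enumerate}

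In each case $\nu_n$ is dominant onto $\cM_n$, which confirms the counts.

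The main obstacle is justifying that the general point of each fiber is simply branched, so that the open set is nonempty. I expect to handle this by the Hurwitz-space dimension count: $\mathcal H^{\mathrm{sb}}_{3,n}$ has dimension $2n+1$ and dominates $\cM_n$ for $n\le3$ (every genus $\le 3$ curve is trigonal), so every component of a general fiber has dimension $2n+1-\dim\cM_n$. The fiber dimensions $2,2,1$ then follow, consistent with the direct computations above.
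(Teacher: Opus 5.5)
Your proposal is correct and follows essentially the same route as the paper: a case-by-case count of base-point-free degree-three pencils on a general $C_0$. These are a $\PP^2$ of pencils over $\Pic^3$ modulo translations for $n=1$, an open subset of $\Pic^3(C_0)$ for $n=2$, and the projections $K_{C_0}-p$ of the plane quartic for $n=3$, with your Hurwitz count $\dim\mathcal H^{\mathrm{sb}}_{3,n}=2n+1$ as an extra cross-check not in the paper. One small caution: dominance of $\nu_n$ does not follow from ``every genus $\le 3$ curve is trigonal'', since hyperelliptic genus-three curves are not and trigonal does not mean simply branched; it does follow from your own upper bounds on the pencil families, valid on every curve in the image, combined with $\dim\mathcal H^{\mathrm{sb}}_{3,n}=2n+1$.
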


\begin{proof}
By Theorem~\ref{thm:cubicfactorization}, it is enough to compute the dimension of
the family of simply branched degree-three maps on a fixed general curve $C_0$ of
genus $n$.

Suppose first that $n=1$. Since $\deg L=3>0=2n-2$, every $L\in\Pic^3(C_0)$
satisfies $h^0(C_0,L)=3$. A $g^1_3$ is therefore given by a pair
\[
(L,V),\qquad L\in\Pic^3(C_0),\quad
V\in\operatorname{Gr}(2,H^0(C_0,L))\simeq\PP^2,
\]
so such pairs form a $3$-dimensional family. A subspace $V$ has a base point $p$
precisely when $V=H^0(C_0,L(-p))$, which happens on a one-dimensional locus; hence
the base-point-free pairs form a nonempty open subset, and imposing simple
branching gives a further nonempty open subset. Since maps are considered up to
automorphisms of the source, we quotient by the translation group of $C_0$, hence the general fiber has
dimension $2$.

Let $n=2$. By the Riemann-Roch formula, every $L\in\Pic^3(C_0)$ defines a complete $g^1_3$. The
base-point-free ones form a nonempty open subset of $\Pic^3(C_0)$, and imposing
simple branching gives a further nonempty open subset. Since $\dim\Pic^3(C_0)=2$,
the general fiber has dimension $2$.

Finally, let $n=3$ and let $C_0$ be general, hence nonhyperelliptic. If $L$ is a
$g^1_3$, the Riemann-Roch formula gives
\[
h^0(K_{C_0}-L)=h^0(L)-1\ge1 .
\]
Since $\deg(K_{C_0}-L)=1$, there is a point $p\in C_0$ with $L\simeq K_{C_0}(-p)$,
for a unique $p$. Conversely, $K_{C_0}(-p)$ is
the degree-three pencil obtained by projecting the canonical plane quartic
$C_0\subset\PP^2$ from $p$. Thus the pencils are parametrised by $p\in C_0$, and
the general one is simply branched, the exceptions being the points lying on a
flex tangent of the quartic \cite[Section~5]{AP}. Hence the general fiber has
dimension $1$.
\end{proof}

The following lemma is classical. The first assertion follows from the Castelnuovo-Severi inequality, while the genus-four statement follows from the canonical model of a nonhyperelliptic genus-four curve as the intersection of a quadric and a cubic (see the proof of Proposition \ref{prop:partner} for details).

\begin{lemma}\label{lem:uniqueg13}

A smooth curve $C_0$ of genus at least five admits at most one base-point-free $g^1_3$.

A general smooth curve $C_0$ of genus four has exactly two $g^1_3$'s, and both are simply branched.

\end{lemma}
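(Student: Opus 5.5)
For the first assertion I will argue by contradiction via Castelnuovo--Severi. Suppose $C_0$ has genus $n\ge5$ and carries two distinct base-point-free $g^1_3$'s, given by morphisms $h_1,h_2:C_0\to\PP^1$ of degree three. Consider
\[
(h_1,h_2):C_0\longrightarrow \PP^1\times\PP^1 .
\]
Its image is an irreducible curve $\Gamma$. The first step is to exclude the case where this map is not birational onto $\Gamma$. If it factors through a map $C_0\to\Gamma'$ of degree $m>1$, then $m$ divides both $\deg h_1=3$ and $\deg h_2=3$, so $m=3$. Then $h_1$ and $h_2$ both factor through the same degree-three map $C_0\to\Gamma'$, with $\Gamma'\to\PP^1$ of degree one in each case. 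Hence $h_2=\alpha\circ h_1$ for an automorphism $\alpha$ of $\PP^1$, so the pencils coincide, which is a contradiction.

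So the map is birational onto $\Gamma$. The class of $\Gamma$ is $(3,3)$, because the projections have degree three. The arithmetic genus of a $(3,3)$ curve is $(3-1)(3-1)=4$, so the geometric genus satisfies $n\le4$, again a contradiction. This is exactly the Castelnuovo--Severi bound $g\le (3-1)(3-1)+3\cdot0+3\cdot0$. The non-birational case is the only real point to check in this part.

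For the genus-four statement, let $C_0$ be general of genus four. In particular $C_0$ is nonhyperelliptic, and its canonical model $C_0\subset\PP^3$ is the complete intersection of a unique quadric $Q$ and a cubic. For general $C_0$, the quadric $Q$ is smooth, since the singular quadrics form a divisor in moduli. The plan is then as follows. A $g^1_3$ $L$ gives $h^0(K-L)=h^0(L)-1=1$ by Riemann--Roch, so the three points of each divisor in $|L|$ span only a line. That line meets $C_0$ in three points, hence lies in $Q$ by Bézout, since a line not in $Q$ meets $Q$ in at most two points. The lines of $Q$ come in two rulings, and each ruling cuts out a base-point-free $g^1_3$, namely $\cO_{\PP^1\times\PP^1}(1,0)|_{C_0}$ and $\cO_{\PP^1\times\PP^1}(0,1)|_{C_0}$, since $C_0$ has class $(3,3)$. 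Conversely, every $g^1_3$ sweeps a one-parameter family of trisecant lines in $Q$, and these lines must form one ruling. Hence there are exactly two $g^1_3$'s, and they are exchanged by $L\mapsto K_{C_0}-L$ because $(1,0)+(0,1)=K$.

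Simple branching is the step I expect to need care. I plan to use the fact that the Hurwitz space $\mathcal H^{\mathrm{sb}}_{3,4}$ has dimension $2\cdot4+4-3=9=\dim\cM_4$ and dominates $\cM_4$. This holds because every genus-four curve is trigonal, and a general member of the $9$-dimensional trigonal family, which is all of $\cM_4$, is simply branched. So the non-simply-branched pencils form a proper closed subset of the space of pairs (curve, $g^1_3$). That space is finite over $\cM_4$, generically of degree two, and it is irreducible or at least each component dominates. Therefore the locus of non-simply-branched pencils maps to a proper closed subset of $\cM_4$. Here I would check that each of the two pencils is individually generic. This follows from monodromy, or from the involution $L\mapsto K-L$ acting on the family, which swaps the two sheets, so that the bad locus is symmetric and still proper. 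Consequently, for general $C_0$ both pencils are simply branched.
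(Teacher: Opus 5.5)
Your first part (the map $(h_1,h_2)\colon C_0\to\PP^1\times\PP^1$, the exclusion of the non-birational case, and $p_a=4$ for a curve of class $(3,3)$) is exactly the Castelnuovo--Severi argument the paper invokes. Matching the $g^1_3$'s with the two rulings of the canonical quadric is likewise the paper's route in genus four.

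Two small repairs are needed there. First, in genus four Riemann--Roch gives $h^0(K_{C_0}-L)=h^0(L)=2$, not $h^0(L)-1=1$; the latter is the genus-three formula. It is the value $2$, i.e.\ a pencil of planes through each $D\in|L|$, that makes $D$ span a line. With $h^0(K_{C_0}-L)=1$ you would only get a plane. Second, the fact that each such $D$ spans a \emph{unique} line is what shows $\cO(1,0)|_{C_0}\not\simeq\cO(0,1)|_{C_0}$, which you need for ``exactly two''.

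The genuine gap is the claim that \emph{both} pencils are simply branched. Dominance of $\mathcal H^{\mathrm{sb}}_{3,4}\to\cM_4$ only shows that on a general $C_0$ \emph{at least one} $g^1_3$ is simply branched. You should justify that dominance by nonemptiness (Riemann existence) plus finiteness of the fibres, not by asserting that a general trigonal curve is simply branched, since that is what is being proved.

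``Each component dominates'' does not suffice. If the incidence variety $\mathcal G=\{(C_0,L)\}$ over the smooth-quadric locus split into two sheets, the simply branched locus could lie entirely in one of them.

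The involution $\tau\colon L\mapsto K_{C_0}-L$ does not rescue this, because it does not preserve the bad locus. On the $(3,3)$ model, failure of simple branching for $L$ (resp.\ $K_{C_0}-L$) means a triple contact of $C_0$ with a line of the first (resp.\ second) ruling. These are two different divisors in $|\cO(3,3)|$. So ``the bad locus is symmetric'' is false, and properness of $Z\cup\tau(Z)$ presupposes what you want to prove.

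The missing input is irreducibility of $\mathcal G$, equivalently the monodromy you mention, and you can prove it directly. Choose $Q\simeq\PP^1\times\PP^1$ so that $L$ is the first ruling; then every pair $(C_0,L)$ is $(C,\cO(1,0)|_C)$ for some smooth $C$ in the irreducible space $|\cO(3,3)|$.

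Concretely, the argument then runs as follows. The locus of $C$ with $\pi_1|_C$ simply branched is open, and it is nonempty by your dominance argument. Its image under the factor swap of $\PP^1\times\PP^1$ is the locus with $\pi_2|_C$ simply branched. Two dense open subsets of $|\cO(3,3)|$ meet. It is this swap of factors, not $\tau$ on a fixed curve, that supplies the symmetry you were after.

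The paper only cites the genus-four statement as classical and gives no argument for simple branching, so your write-up must supply this step itself.
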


We are ready to prove Theorem \ref{thm:cubicmain}.

\begin{proof}[Proof of Theorem \ref{thm:cubicmain}]
The cases $g=2,3,4$ are Lemma \ref{lem:lowgenus}.

Let $g=5$. Then $C_0$ has genus four. A general genus-four curve has exactly two simply branched $g^1_3$'s by Lemma~\ref{lem:uniqueg13}. Proposition \ref{prop:trigonalcorrespondence} associates to these two pencils two cyclic \'etale triple covers. For a general genus-four curve $\Aut(C_0)$ is trivial, so the two pencils cannot be identified by an automorphism of $C_0$; hence the two covers are distinct in $\RH_5[3]$. Therefore
\[
\deg \mathcal P_{5}[3]=2.
\]

Finally let $g\ge6$. Then $g(C_0)=g-1\ge5$. The curve $C_0$ carries the base-point-free trigonal pencil by Proposition \ref{prop:trigonalcorrespondence}; in particular, it is nonhyperelliptic. Thus every $g^1_3$ on $C_0$ is base-point-free, and Lemma \ref{lem:uniqueg13} shows that the trigonal pencil is unique. Hence $\Psi_g$ is injective. Since $\Phi_{g-1}$ is injective on the image, $\cP_g[3]$ is injective as well.
\end{proof}

\begin{proposition}\label{prop:partner}
Let $C_0$ be a nonhyperelliptic genus-four curve and let $L$ be a base-point-free $g^1_3$ on $C_0$. Then
\[
L^{\mathrm{res}}:=K_{C_0}\otimes L^{-1}
\]
is again a base-point-free $g^1_3$. On the open locus where the canonical quadric containing $C_0\subset\PP^3$ is smooth, the two pencils $L$ and $L^{\mathrm{res}}$ are distinct and are exactly the two rulings of the quadric. Thus
\[
\tau:(C_0,L)\longmapsto (C_0,K_{C_0}\otimes L^{-1})
\]
is the (rational) involution exchanging the two points in the general fiber of $\mathcal P_{5}[3]$.

The involution has a fixed point precisely when
\[
L^{\otimes2}\simeq K_{C_0}.
\]
On the nonhyperelliptic genus-four locus this is equivalent to the canonical quadric being singular, or equivalently to $C_0$ having a vanishing theta-null.
\end{proposition}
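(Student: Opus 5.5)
The plan is to use the canonical model of $C_0$. Since $C_0$ is nonhyperelliptic of genus four, $\varphi_K:C_0\hookrightarrow\PP^3$ embeds it as a degree-six curve lying on a unique quadric $Q$ (and on a cubic). I would begin with the residual line bundle. Riemann--Roch gives $h^0(K-L)=h^0(L)-3+1-4+\dots$; concretely, $h^0(K_{C_0}-L)=h^0(L)-\deg L+n-1=2-3+3=2$. Since $\deg(K-L)=3$, the bundle $L^{\mathrm{res}}$ is a $g^1_3$. It is base-point-free because $C_0$ is nonhyperelliptic. Indeed, if $p$ were a base point, then $K-L-p$ would be a $g^1_2$.

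Next comes the geometric identification with rulings. Take $D\in|L|$. Then $h^0(K-D)=2$, so the three points of $D$ span only a line $\ell_D$ in the canonical space; this is geometric Riemann--Roch. As $D$ moves in $|L|$, the lines $\ell_D$ sweep out a surface. Each such line meets $C_0$ in $3$ points, but any quadric containing $C_0$ meets $\ell_D$ in at most two points unless it contains $\ell_D$. Hence $\ell_D\subset Q$ for every $D$. The lines $\ell_D$ are pairwise disjoint, since two of them meeting would give $4$ points of $D+D'$ spanning a plane, contradicting $h^0(K-D-D')=h^0(K-2L)$ in the relevant count. I would instead argue more simply: distinct divisors of a base-point-free pencil are disjoint, and the lines form a one-parameter family on $Q$, hence a ruling. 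When $Q$ is smooth, it has exactly two rulings, each cutting $C_0$ in a $g^1_3$ because a line class $(1,0)$ meets $C_0\sim(3,3)$ in $3$ points.

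The hyperplane section through a line of one ruling is that line plus a line of the other ruling. Therefore the two ruling pencils sum to $K$, so the other ruling is $L^{\mathrm{res}}$. Conversely, every base-point-free $g^1_3$ arises from a ruling by the argument above, so these are exactly the two pencils, and they are distinct since the rulings are distinct.

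Then I would handle the fixed points. The involution $\tau$ fixes $(C_0,L)$ iff $L\simeq K-L$, i.e. $L^{2}\simeq K$. If $Q$ is smooth, the two rulings give non-isomorphic bundles: they are distinct pencils, and $h^0=2$ complete, so $L\not\simeq L^{\mathrm{res}}$. If $Q$ is a cone, all lines lie in a single ruling through the vertex, so $L^{\mathrm{res}}=L$, i.e. $L^{2}\simeq K$. Such an $L$ is an effective even theta characteristic with $h^0=2$, which is exactly a vanishing theta-null. Conversely, a vanishing theta-null $\kappa$ with $h^0(\kappa)\ge2$ (even, so $h^0=2$) is a $g^1_3$ with $2\kappa=K$, and this forces $Q$ singular by the previous dichotomy. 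The statement about the Prym fibre follows from Theorem~\ref{thm:cubicfactorization} together with Lemma~\ref{lem:uniqueg13}.

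The main obstacle is making rigorous that the lines $\ell_D$ lie in $Q$ and form a full ruling, rather than merely a curve of lines. The argument ``$3$ points of $C_0$ on $\ell_D$ force $\ell_D\subset Q$'' settles containment, and a one-dimensional family of lines on a quadric must be a ruling. That should suffice.
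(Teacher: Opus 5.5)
Your proof is correct, but it runs in the opposite direction from the paper's.

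\textbf{The paper's route.} It starts from a smooth quadric $Q\simeq\PP^1\times\PP^1$ and takes the two pencils $\cO_{C_0}(1,0)$ and $\cO_{C_0}(0,1)$ cut out by the projections. It obtains $K_{C_0}\simeq L_1\otimes L_2$ by adjunction, using $K_Q\simeq\cO_Q(-2,-2)$ and $C_0\in|\cO_Q(3,3)|$. It does not show that an arbitrary base-point-free $g^1_3$ is one of these two; that is left to the classical Lemma~\ref{lem:uniqueg13}. The equivalence between $L^{\otimes 2}\simeq K_{C_0}$ and $Q$ being a cone is quoted from \cite{ACGH}.

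\textbf{Your route.} You start from an arbitrary $L$. By geometric Riemann--Roch each $D\in|L|$ spans a trisecant line, which must lie on $Q$. So $|L|$ is cut out by a one-dimensional family of lines on $Q$, i.e.\ by a ruling. Your residuality step (a plane section of $Q$ through a line of one ruling is that line plus a line of the other) is just $\cO_Q(1)\simeq\cO_Q(1,1)$, so it is equivalent to the paper's adjunction computation.

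\textbf{What each buys.} Your single argument yields both ``exactly the two rulings'' and the cone dichotomy. On a cone all lines form one family, which forces $L\simeq L^{\mathrm{res}}$. On a smooth quadric the two pencils share no divisor, since lines from the two rulings meet in a single point rather than in a length-three scheme; this forces $L\not\simeq L^{\mathrm{res}}$. This makes the proposition self-contained. The paper's version is shorter but leans on classical facts for the converse direction.

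\textbf{One correction.} Delete the sentence asserting that the lines $\ell_D$ are pairwise disjoint. It is false in general: on a cone they all pass through the vertex. The count is also off: $\ell_D\cap\ell_{D'}\neq\emptyset$ iff the six points of $D+D'$ span a plane, iff $h^0(2L)=4$, iff $2L\simeq K_{C_0}$. What the disjointness of distinct members of $|L|$ really gives you is injectivity of $D\mapsto\ell_D$. This holds because $\ell_D\cap C_0=\ell_D\cap F$ has length exactly $3$, where $F$ is the cubic with $C_0=Q\cap F$. That injectivity is what makes the family of lines a full component of the variety of lines on $Q$.
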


\begin{proof}
Since $\deg L=3$ and $g(C_0)=4$, the Riemann-Roch formula gives
\[
h^0(C_0,K_{C_0}\otimes L^{-1})=h^0(C_0,L)=2.
\]
Since $C_0$ is not hyperelliptic, $L^{\mathrm{res}}$ is a base-point-free $g^1_3$. In the canonical embedding, a genus-four curve is the complete intersection of the unique quadric $Q$ with a cubic. Assume that the canonical quadric $Q$ is smooth. Then
\[
Q\simeq \PP^1\times \PP^1.
\]
Since the canonical embedding of $C_0$ is the complete intersection of
$Q$ with a cubic surface, we have
\[
C_0\in \left|\mathcal O_Q(3,3)\right|.
\]
The two projections
\[
\pi_1,\pi_2:Q\to\PP^1
\]
restrict to degree-three morphisms on $C_0$, hence define two
base-point-free trigonal pencils
\[
L_1:=\mathcal O_{C_0}(1,0),
\qquad
L_2:=\mathcal O_{C_0}(0,1).
\]
Indeed, a fiber of either ruling meets a curve of bidegree $(3,3)$ in
three points.

By adjunction,
\[
K_{C_0}
\simeq
\left(K_Q\otimes\mathcal O_Q(C_0)\right)\big|_{C_0}.
\]
Since
\[
K_Q\simeq\mathcal O_Q(-2,-2)
\qquad\text{and}\qquad
\mathcal O_Q(C_0)\simeq\mathcal O_Q(3,3),
\]
it follows that
\[
K_{C_0}
\simeq
\mathcal O_{C_0}(1,1)
\simeq
L_1\otimes L_2.
\]
Thus
\[
L_2\simeq K_{C_0}\otimes L_1^{-1},
\]
so the involution
\[
L\longmapsto K_{C_0}\otimes L^{-1}
\]
exchanges the two trigonal pencils cut out by the two rulings of $Q$. By Theorem \ref{thm:cubicfactorization}, these two pencils correspond exactly to the two cyclic covers in the general Prym fiber.

A fixed point satisfies $L\simeq K_{C_0}\otimes L^{-1}$, equivalently $L^2\simeq K_{C_0}$. For a nonhyperelliptic genus-four curve this occurs exactly when the unique canonical quadric is a cone (see, for example, \cite[p. 206]{ACGH}); the corresponding $g^1_3$ is then a theta characteristic with two sections, i.e. a vanishing theta-null. 
\end{proof}

\section*{Statement on AI use}
ChatGPT 5.6 Pro and Claude Opus 5 were used to assist with exploratory work, including suggesting ideas, carrying out auxiliary calculations, and assisting with the drafting and revision of the manuscript. The author directed
the overall proof strategies and the development of the mathematical arguments.
The author independently verified all mathematical claims and references, and takes full responsibility for the final manuscript.

\bibliographystyle{plain}
\bibliography{references}

\printaddress

\end{document}